\documentclass[a4paper]{article}

\usepackage{graphicx,amssymb}
\usepackage{amsmath}
\usepackage{cases}
\usepackage{float}
\usepackage{listings}
\usepackage{times}
\usepackage{tikz}
\usepackage{epic}
\usepackage{titlesec}
\usepackage{geometry}
\usepackage{amsthm}
\usepackage{bm}
\usepackage{stmaryrd}
\usepackage{subfigure}
\usepackage{indentfirst}
\usepackage{xcolor}
\usepackage{epstopdf}
\usepackage{enumerate}
\usepackage{enumitem}
\usepackage{hyperref}

\definecolor{red}{rgb}{1.00,0.00,0.00}
{\numberwithin{equation}{section}
\setlength{\parindent}{1em}

\newtheorem{theorem}{Theorem}[section]
\newtheorem{lemma}{Lemma}[section]
\newtheorem{remark}{Remark}[section]

\newtheorem{example}{Example}[section]

\SetSymbolFont{stmry}{bold}{U}{stmry}{m}{n}
\renewcommand{\footnotesize}{\scriptsize}
\newcommand{\normmm}[1]{{\left\vert\kern-0.25ex\left\vert
\kern-0.25ex\left\vert #1
    \right\vert\kern-0.25ex\right\vert\kern-0.25ex\right\vert}}
\geometry{left=3cm,right=3cm,top=4cm,bottom=2.5cm}

\newcommand{\draft}[1]{\textcolor{black}{#1}}
\newcommand{\bs}{\boldsymbol}
\newcommand{\matr}[1]{#1}

\begin{document}
\title{A new staggered DG method for the Brinkman problem robust in the Darcy and Stokes limits}
\author{Lina Zhao\footnotemark[1]\qquad
\;Eric Chung\footnotemark[2]\qquad
\;Ming Fai Lam\footnotemark[3]}
\renewcommand{\thefootnote}{\fnsymbol{footnote}}
\footnotetext[1]{Department of Mathematics, The Chinese University of Hong Kong, Shatin, New Territories, Hong Kong SAR, China. ({lzhao@math.cuhk.edu.hk})}
\footnotetext[2]{Department of Mathematics, The Chinese University of Hong Kong, Shatin, New Territories, Hong Kong SAR, China. ({tschung@math.cuhk.edu.hk})}
\footnotetext[3]{Department of Mathematics, The Chinese University of Hong Kong, Shatin, New Territories, Hong Kong SAR, China. ({mflam@math.cuhk.edu.hk})}

\date{}
\maketitle

\textbf{Abstract:}
In this paper we propose a novel staggered discontinuous Galerkin method for the Brinkman problem on general quadrilateral and polygonal meshes. The proposed method is robust in the Stokes and Darcy limits, in addition, hanging nodes can be automatically incorporated in the construction of the method, which are desirable features in practical applications. There are three unknowns involved in our formulation, namely velocity gradient, velocity and pressure. Unlike the original staggered DG formulation proposed for the Stokes equations in \cite{KimChung13}, we relax the tangential continuity of velocity and enforce different staggered continuity properties for the three unknowns, which is tailored to yield an optimal $L^2$ error estimates for velocity gradient, velocity and pressure independent of the viscosity coefficient. Moreover, by choosing suitable projection, superconvergence can be proved for $L^2$ error of velocity. Finally, several numerical results illustrating the good performances of the proposed method and confirming the theoretical findings are presented.

\textbf{Keywords:} Brinkman problem, Darcy law, Stokes equations, Staggered DG method, General meshes, Superconvergence

\pagestyle{myheadings} \thispagestyle{plain}
\markboth{L. Zhao} {A new SDG method for \draft{Brinkman} problem}

\section{Introduction}

In this paper, we consider the following \draft{Brinkman} problem on a bounded polygonal domain $\Omega \subset \mathbb{R}^2$
\begin{equation}
\begin{split}
-\epsilon \Delta \bm{u}+\alpha \bm{u}+\nabla p& =\bm{f}\;\;\; \mbox{in}\;\Omega,\\
\text{div}\,\bm{u}&=g\quad \mbox{in}\;\Omega,\\
\bm{u}&=0 \quad \mbox{on }\;\partial \Omega,
\end{split}
\label{eq:model}
\end{equation}
where $\bm{u}$ is the velocity, $p$ is the pressure, $\epsilon$ is the effective viscosity constant and $\bm{f}\in L^2(\Omega)^2$ is the external body force. In addition, we also assume that there exist real numbers $\alpha_{\text{min}}$ and $\alpha_{\text{max}}$ such that $0<\alpha_{\text{min}}\leq \alpha\leq \alpha_{\text{max}}$.

The Brinkman problem \eqref{eq:model} is used to model fluid motion in porous media with fractures. It locally behaves likes a Stokes or Darcy problem depending on the value of a dimensionless parameter, which can be interpreted as a local friction coefficient. Developing a numerical method for the Binkman problem that is robust for both Stokes and Darcy limits is challenging. The numerical experiments in \cite{MardalTaiWinther02,Hannukainen11} indicate that the convergence rate deteriorates as the Brinkman becomes Darcy-dominating when certain stable Stokes elements are used; such elements include the conforming $P_2$-$P_0$ element, the nonconforming Crouzeix-Raviart element, the Mini element and the Taylor-Hood elements. Similarly, the convergence rate deteriorates as the Brinkman problem becomes Stokes-dominating when Darcy stable elements such as the lowest order Raviart-Thomas elements \cite{MardalTaiWinther02} are used. To remedy this and obtain uniformly stable methods for both Darcy and Stokes limits, a large deal of effort has been made. The use of Darcy-tailored, $H(\text{div};\Omega)$-conforming finite element method is studied in \cite{Konno11}, where a dual mixed formulation is exploited and a symmetric interior penalty Galerkin (SIPG) method is employed to enforce the tangential continuity of the velocity. The use of Stokes-tailored is analyzed in \cite{MardalTaiWinther02}, where a new nonconforming element is constructed. In addition to the aforementioned works, one can also refer to \cite{BurmanHansbo05,BurmanHansbo07,XieXu08,Badia09,Braack11,Guzman12,Konno12,EvansHughes13,
Gatica14,Vassilevski14,Anaya15ESAIM,Anaya16,Benner16,Hong16,Araya17,FuQiu19} and the references therein for a glance at uniformly stable methods for the Brinkman problem.
Recently, some novel methods have been proposed to solve the Brinkman problem on general quadrilateral and polygonal meshes, which is tricky and usually requires special treatment in order to deliver robust results with respect to the rough grids. The development of numerical methods for Brinkman problem on general meshes is still in its infancy and the existing methods that have been successfully designed on general meshes for Brinkman problem include the weak Galerkin method, the virtual element method and the hybrid high-order methods \cite{MuWangYe14,Caceres17,Vacca18,Botti18}.

Staggered discontinuous Galerkin method (SDG) is initially proposed for wave propagation problems in \cite{EricEngquistwave,ChungWave2} on triangular meshes, since then it has been successfully applied to a wide range of partial differential equations arising from practical applications \cite{ChungCiarletYu13,LeeKim16,KimChungLam16,ChungQiu17,DuChung18,
ChungParkZhao18,CheungChungKim18}. Recently SDG methods on triangular meshes have been extended to solve Darcy law and the Stokes equations on general quadrilateral and polygonal meshes \cite{LinaPark, LinaParkShin}. The application to coupled Stokes-Darcy problem is also considered in \cite{LinaParkSD}.  The principal idea behind SDG method for the Stokes equations proposed in \cite{LinaParkShin} is to decompose the computational domain into the primal partition (quadrilateral or polygonal meshes), then the primal mesh is further divided into the union of triangles by connecting a certain interior point to the vertices of the primal mesh, and the dual mesh is generated simultaneously during this subdivision process. Based on the primal meshes and the dual meshes, we can construct three sets of basis functions with staggered continuity properties to approximate velocity gradient, velocity and pressure, respectively. SDG methods earn many desirable features, which can be summarized as follows: First, it preserves mass conservations and superconvergence can be achieved; Second, it can be flexibly applied to general quadrilateral and polygonal meshes and hanging nodes can be simply incorporated in the construction of the method, which favors adaptive mesh refinement; Third, SDG method can be extended to high order polynomial approximations straightforwardly. Fourth, thanks to the staggered continuity properties involved, numerical flux or penalty term is not needed. The aforementioned properties make SDG method highly desirable in practice.


In the staggered DG method proposed in \cite{KimChung13,LinaParkShin} for the Stokes equations, the discrete unknown for velocity gradient is continuous in the normal direction over the dual edges, the discrete unknown for velocity is continuous over the primal edges, and the discrete unknown for pressure is continuous over the dual edges. Our undisplayed analysis and numerical experiments indicate that if we apply the original staggered DG method for the Stokes equations to the Brinkman problem, then it will lead to poor performances when the Brinkman problem becomes Darcy-dominating. This can be briefly explained as follows: when the Brinkman problem becomes Darcy-dominating, i.e., $\epsilon\rightarrow 0$, the finite element pairs for velocity and pressure is no longer stable in terms of Darcy law. In order to deliver uniformly stable results, we need to modify the finite element spaces designed for the discrete unknowns. More precisely, we relax the tangential continuity of velocity and switch the staggered continuity properties for all the variables involved. Moreover, the normal continuity of the velocity gradient over the dual edges is enforced in a weak sense. Then based on the modified spaces, we design a novel staggered DG method for the Brinkman problem that is robust in both Stokes and Darcy limits. We can view our novel method as Darcy-tailored staggered DG method for the Brinkman problem. A rigorous error analysis for velocity gradient, velocity and pressure measured in $L^2$ error is presented, where the error estimates are shown to be independent of the viscosity coefficient. In addition, superconvergent error estimates in $L^2$ error of velocity with suitable projection are obtained. We emphasize that thanks to the staggered continuity properties involved in our discrete finite element spaces, no numerical fluxes or stabilization terms are required, which is appreciated compared to other DG methods. In addition, our method can handle fairly general meshes and hanging nodes can be automatically incorporated in the construction of the method, which is highly appreciated for the practical applications. Furthermore, our formulation is based on velocity gradient-velocity-pressure, existing methods based on this formulation is still quite rare \cite{FuQiu19}.

The rest of the paper can be organized as follows. In the next section, we derive our novel SDG method for the Brinkman problem. Rigorous error analysis for velocity gradient, velocity and pressure is carried out in section~\ref{sec:error}. Then we present some numerical experiments to verify the robustness and accuracy of the proposed method in section~\ref{sec:numerical}. Finally, a conclusion is given at the end of the paper.

\section{Description of a new SDG method}

In this section, we attempt to derive a uniformly stable SDG method for the Brinkman problem. The key ingredient is to design appropriate finite element spaces earning staggered continuity properties. Based on which we can design a novel SDG method which is robust both in Stokes and Darcy limits, besides it can be flexibly applied to general quadrilateral and polygonal meshes.

We introduce an additional unknown $L = \sqrt{\epsilon}\nabla \bm{u}$, thereby the \draft{Brinkman} model problem \eqref{eq:model} can be recast into the following first order system
\begin{align}
L&=\sqrt{\epsilon}\nabla \bm{u}\quad \mbox{in}\;\Omega,\label{eq:model1}\\
- \sqrt{\epsilon}\text{div}\, L+\alpha \bm{u}+\nabla p& =\bm{f}\hspace{1.1cm} \mbox{in}\;\Omega,\label{eq:model2}\\
\text{div}\, \bm{u}&=g\hspace{1.2cm} \mbox{in}\;\Omega,\label{eq:model3}\\
\bm{u}&=\bm{0} \hspace{1.2cm} \mbox{on }\;\partial \Omega
\end{align}
with the restriction $\int_\Omega p\;dx=0$.

We introduce some notations that will be used later. For a set $D\subset \mathbb{R}^{2}$, we denote the scalar product in $L^{2}(D)$ by $(\cdot,\cdot)_{D}$, namely $(p,q)_{D}:=\int_{D} p\,q \;dx$, we use the same symbol $(\cdot,\cdot)_{D}$ for the inner product in $[L^{2}(D)]^{2}$ and in $[L^{2}(D)]^{2\times 2}$. More precisely $(\sigma,\tau)_{D}:
=\sum_{i=1}^{2}\sum_{j=1}^{2}(\sigma^{ij},
\tau^{ij})_{D}$ for $\sigma,\tau\in [L^2(D)]^{2\times 2}$.
We denote by $(\cdot, \cdot)_e$ the scalar product in $L^2(e), e\subset \mathbb{R}$ (or duality pairing), for a scalar, vector, or tensor functions.
Automatically, we define $\|\cdot\|_{0, D}$ to be the $L^2$-norm
on $D$ and $\|\cdot\|_{0,e}$ the $L^2$-norm on $e$. When $D$
coincides with $\Omega$, the subscript $\Omega$ will be dropped
unless otherwise mentioned. Throughout this paper, we use $C$ to
denote a generic positive constant which may have a different
value at different occurrences.

Now we introduce the construction of our SDG spaces, in line with this we then present our novel SDG method.
 To begin, we construct three meshes: the primal mesh $\mathcal{T}_{u}$, the dual mesh $\mathcal{T}_{d}$, and the primal simplicial submeshes $\mathcal{T}_h$. For a polygonal domain $\Omega$, consider a general mesh $\mathcal{T}_{u}$ (of $\Omega$) that consists of nonempty connected close disjoint subsets of $\Omega$:
\begin{align*}
\bar{\Omega}=\bigcup_{T\in \mathcal{T}_{u}}T.
\end{align*}
We let $\mathcal{F}_{u}$ be the set of all primal edges in this partition and $\mathcal{F}_{u}^{0}$ be the
subset of all interior edges, that is, the set of edges in $\mathcal{F}_{u}$ that do not lie on $\partial\Omega$. We construct the primal submeshes $\mathcal{T}_h$ as a triangular subgrid of the primal grid: for an element $T\in \mathcal{T}_{u}$, elements of $\mathcal{T}_h$ are obtained by connecting the interior point $\nu$ to all vertices of $\mathcal{T}_{u}$ (see Figure~\ref{grid}).
Moreover, we will use $\mathcal{F}_{p}$ to denote the set of all the dual edges generated by this subdivision process. For each triangle
$\tau\in \mathcal{T}_h$, we let $h_\tau$ be the diameter of
$\tau$, $h_e$ be the length of edge $e\subset\partial \tau$, and $h=\max\{h_\tau, \tau\in \mathcal{T}_h\}$.
In addition, we define $\mathcal{F}:=\mathcal{F}_{u}\cup \mathcal{F}_{p}$ and $\mathcal{F}^{0}:=\mathcal{F}_{u}^{0}\cup \mathcal{F}_{p}$. We rename the primal element by $S(\nu)$, which is assumed to be star-shaped with respect to a ball of radius $\rho h_{S(\nu)}$, where $\rho$ is a positive constant. In addition, we assume that for every edge $e\in \partial S(\nu)$, it satisfies $h_e\geq Ch_{S(\nu)}$. More discussions about mesh regularity assumptions for general meshes can be referred to \cite{Beir13,Pietro15,Cangiani16}.
The construction for general meshes is illustrated in Figure~\ref{grid},
where the black solid lines are edges in $\mathcal{F}_{u}$
and red dotted lines are edges in $\mathcal{F}_{p}$.

Finally, we construct the dual mesh. For each interior edge $e\in \mathcal{F}_{u}^0$, we use $D(e)$ to denote the dual mesh, which is the union of the two triangles in $\mathcal{T}_h$ sharing the edge $e$,
and for each boundary edge $e\in\mathcal{F}_{u}\backslash\mathcal{F}_{u}^0$, we use $D(e)$ to denote the triangle in $\mathcal{T}_h$ having the edge $e$,
see Figure~\ref{grid}.

For each edge $e$, we define
a unit normal vector $\bm{n}_{e}$ as follows: If $e\in \mathcal{F}\setminus \mathcal{F}^{0}$, then
$\bm{n}_{e}$ is the unit normal vector of $e$ pointing towards the outside of $\Omega$. If $e\in \mathcal{F}^{0}$, an
interior edge, we then fix $\bm{n}_{e}$ as one of the two possible unit normal vectors on $e$.
When there is no ambiguity,
we use $\bm{n}$ instead of $\bm{n}_{e}$ to simplify the notation. In addition, we use $\bm{t}$ to denote the corresponding unit tangent vector. We also introduce some notations that will be employed throughout this paper. Let $k\geq 0$ be the order of approximation. For every $\tau
\in \mathcal{T}_{h}$ and $e\in\mathcal{F}$,
we define $P^{k}(\tau)$ and $P^{k}(e)$ as the spaces of polynomials of degree less than or equal to $k$ on $\tau$ and $e$, respectively. In the following, we use $\nabla_h$ and $\text{div}_h$ to denote the element-wise gradient and divergence operators, respectively.


We now define jump terms which will be used throughout the paper.
For each triangle $\tau_i$ in $\draft{\mathcal{T}_h}$ such that $e \subset \partial \tau_i$, we let $\bs{n}_i$ be the outward unit normal vector on $e\subset \partial \tau_i$. The sign $\delta_i$ of $\bs{n}_i$ with respect to $\bs{n}$ on $e$ is then given by
\begin{equation*}
\delta_i = \bs{n}_i \cdot \bs{n} = \begin{cases}
1 & \text{ if } \bs{n}_i = \bs{n} \text{ on } e,\\
-1 & \text{ if } \bs{n}_i = -\bs{n} \text{ on } e.
\end{cases}
\end{equation*}
For a double-valued scalar quantity $\phi$, let $\phi_i$ be the value of $\phi \vert_{\tau_i}$ restricted on $e$. The jump $[\phi]$ across an edge $e$ can then be defined as:
\begin{equation*}
[\phi] = \delta_1 \phi_1 + \delta_2 \phi_2.
\end{equation*}
Similarly, for a vector quantity $\bs{\phi}$ and a matrix quantity $\matr{\Phi}$, the jumps $[\bs{\phi} \cdot \bs{n}]$ and $[\matr{\Phi} \bs{n}]$ across an edge $e$ is defined as:
\begin{equation}
\begin{split}
[\bs{\phi} \cdot \bs{n}] & = \delta_1 (\bs{\phi}_1 \cdot \bs{n}) + \delta_2 (\bs{\phi}_2 \cdot \bs{n}),\\
[\matr{\Phi} \bs{n}] & = \delta_1 (\matr{\Phi}_1 \bs{n}) + \delta_2 (\matr{\Phi}_2 \bs{n}).
\label{eq:jump2}
\end{split}
\end{equation}

\begin{figure}
\centering
\includegraphics[width=12cm]{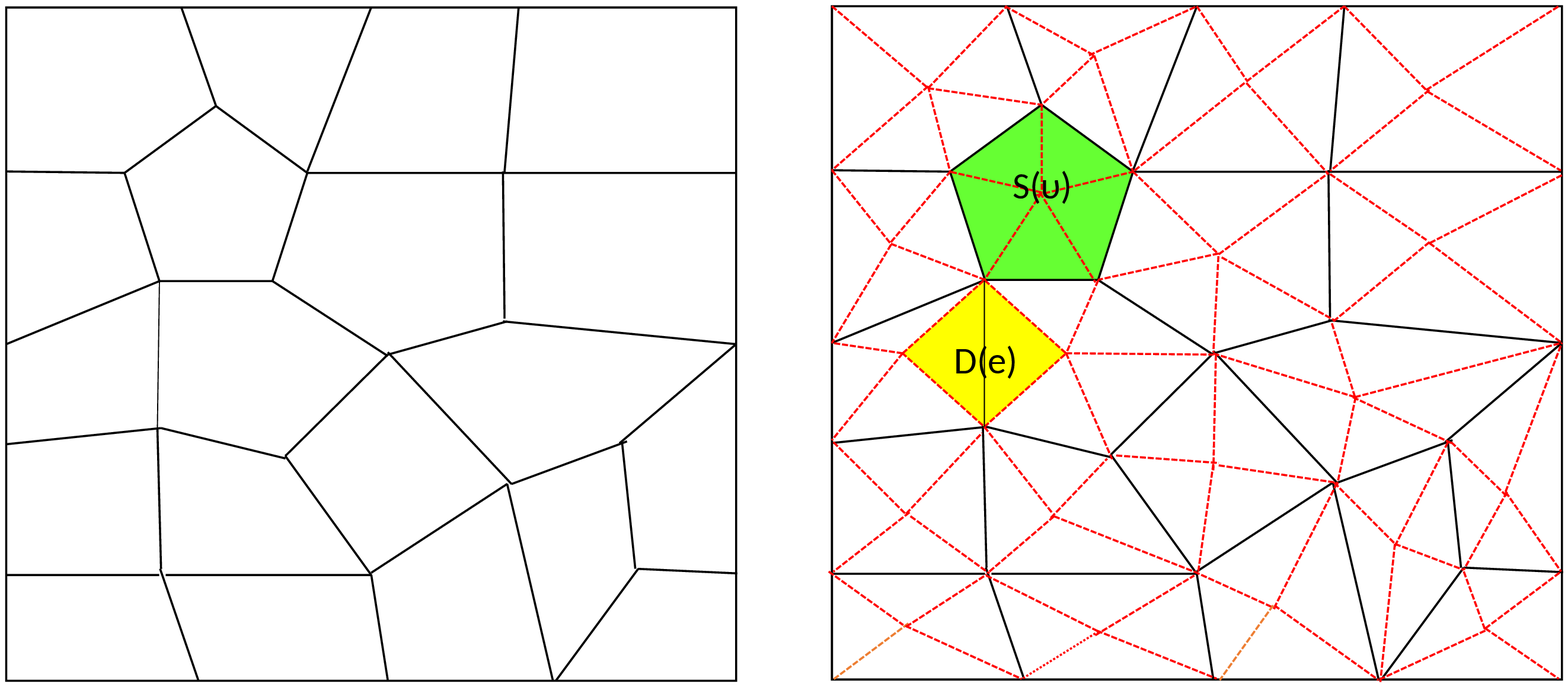}
\setlength{\abovecaptionskip}{-0.5cm}
\caption{Schematic of the primal mesh $S(\nu)$, the dual mesh $D(e)$ and the primal simplicial submeshes.}
\label{grid}
\end{figure}

To specify our new SDG method, we now introduce the finite element spaces employed. Note that the spaces employed in this paper are different from those exploited in original SDG method (cf. \cite{ChungWave2,LinaPark,LinaParkShin}). Our undisplayed analysis and numerical experiments indicate that the original SDG method fails to deliver uniformly stable results. The modifications made in this paper can overcome this issue.
We first define
the following finite element space for velocity:
\begin{equation*}
U^h=\{\bs{v} \: : \:  \bs{v} \vert_{\tau} \in P^k(\tau)^2;  \tau \in \draft{\mathcal{T}_h}; \bs{v} \cdot \bs{n} \text{ is continuous over } e \in \mathcal{F}_p\},
\end{equation*}
which is equipped by
\begin{align*}
\|\bm{v}\|_{X_1}^2&=\|\bm{v}\|_0^2+\sum_{e\in \mathcal{F}_p}h_e\|\bm{v}\cdot \bm{n}\|_{0,e}^2,\\
\|\bm{v}\|_{Z_1}^2&=\|\text{div}_h\bm{v}\|_0^2+\sum_{e\in \mathcal{F}_u}h_e^{-1}\|[\bm{v}\cdot \bm{n}]\|_{0,e}^2,\\
\|\bm{v}\|_{Z_2}^2&=\|\nabla_h \bm{v}\|_0^2+\sum_{e\in \mathcal{F}_u}h_e^{-1}\|[\bm{v}]\|_{0,e}^2+\sum_{e\in \mathcal{F}_p}h_e^{-1}\|[(\bm{v}\cdot \bm{t})\bm{t}]\|_{0,e}^2.
\end{align*}

Next, we define the following finite element space for velocity gradient:
\begin{equation*}
W^h=\{\matr{G} \: : \:  \matr{G} \vert_{\tau} \in P^k(\tau)^{2\times 2};  \tau \in \draft{\mathcal{T}_h}; \matr{G} \bs{n} \text{ is continuous over } e\in \mathcal{F}_u^0\}.
\end{equation*}
In this space, we define the corresponding discrete $H(\text{div};\Omega)$ semi-norm
\begin{align*}
\|G\|_{Z'}^2&=\|\text{div}_h G\|_0^2+\sum_{e\in \mathcal{F}_p}h_e^{-1}\|[G\bm{n}]\|_{0,e}^2.
\end{align*}

Then, we define the following locally $H^{1}(\Omega)$-conforming finite element space for pressure:
\begin{equation*}
P^h=\{q \: : \:  q \vert_{\tau} \in P^k(\tau);  \tau \in \draft{\mathcal{T}_h}; q \text{ is continuous over } e\in \mathcal{F}_u^0;\int_{\Omega} q \;dx=0\}
\end{equation*}
with the corresponding discrete $L^2$ norm and $H^1$ semi-norm
\begin{align*}
\normmm{q}_{0,h}^2&=\|q\|_0^2+\sum_{e\in \mathcal{F}_u}h_e\|q\|_{0,e}^2,\\
\normmm{q}_{1,h}^2&=\|\nabla_h q\|_0^2+\sum_{e\in \mathcal{F}_p}h_e^{-1}\|[q]\|_{0,e}^2.
\end{align*}

Finally, we define the following space which is employed to enforce the weak continuity of the velocity gradient over the dual edge
\begin{equation*}
\widehat{U}^h=\{\widehat{\bs{v}} \: : \:  \widehat{\bs{v}} \vert _e \in P^k(e)^2; \widehat{\bs{v}} \cdot \bs{n}\mid_e = 0 \quad\forall e \in \mathcal{F}_p \}.
\end{equation*}

We are now ready to derive a new SDG method for \eqref{eq:model1}-\eqref{eq:model3}, which is robust in terms of $\epsilon$ and performs efficiently on fairly general meshes. To begin, multiplying \eqref{eq:model1} by a test function $\matr{G} \in W^h$ and integrating over $D(e)$ for $e \in \mathcal{F}_u$, in addition, the tangential component $(\bs{u} \cdot \bs{t}) \bs{t}$ of $\bs{u}$ on the edges $e \in \mathcal{F}_p$ is approximated by an additional unknown $\widehat{\bs{u}}$, thereby we can obtain
\begin{equation*}
\begin{split}
\int_{D(e)} \matr{L}:\matr{G}\; dx
& =
 - \sqrt{\epsilon}\int_{D(e)} \bs{u} \cdot (\text{div } \matr{G}) \; dx
+ \sqrt{\epsilon}\int_{\partial D(e)\cap\mathcal{F}_p} \widehat{\bs{u}} \cdot(\matr{G}\bs{n})\; ds\\
&\qquad+ \sqrt{\epsilon} \int_{\partial D(e)\cap\mathcal{F}_p} (\bs{u} \cdot \bs{n}) \bs{n} \cdot(\matr{G}\bs{n})\; ds,
\end{split}
\end{equation*}
where the facts that $[G\bm{n}]\mid_e=0$ for $e\in \mathcal{F}_u$ and $\bm{u}=\bm{0}$ on $\partial \Omega$ are utilized.

Next, we multiply \draft{\eqref{eq:model2}} by a test function $\bs{v} \in U^h$, integrate over $\mathcal{S}(\nu)$ for $\nu \in \mathcal{N}$, exploit the fact that $[\bm{v}\cdot \bm{n}]=0$ for $e\in \mathcal{F}_p$ and obtain
\begin{equation*}
\begin{split}
\int_{\mathcal{S}(\nu)} \bs{f}\cdot \bs{v} \; dx
& = \sqrt{\epsilon}\int_{\mathcal{S}(\nu)}\matr{L}:\nabla \bs{v}\; dx- \sqrt{\epsilon}\int_{\partial \mathcal{S}(\nu)\cap\mathcal{F}_u} (\matr{L}\bs{n}) \cdot \bs{v}  \; ds 
- \sqrt{\epsilon}\sum_{\tau\in S(\nu)}\int_{\partial \tau\cap \mathcal{F}_p} \left[(\matr{L}\bs{n}) \cdot (\bs{v} \cdot \bs{t}) \bs{t} \right]  \; ds\\
& \qquad + \int_{\mathcal{S}(\nu)}\alpha\bs{u} \cdot \bs{v}\;dx -\int_{\mathcal{S}(\nu)} p \, (\text{div } \bs{v})\;dx+\int_{\partial \mathcal{S}(\nu)} p \, (\bs{v}\cdot \bs{n})\;ds,
\end{split}
\end{equation*}
where, in accordance with our definitions of jumps in \eqref{eq:jump2}, the term $\left[(\matr{L}\bs{n}) \cdot (\bs{v} \cdot \bs{t}) \bs{t} \right]$ on $e \in \mathcal{F}_p$ should be formally written as
\begin{equation*}
\left[(\matr{L}\bs{n}) \cdot (\bs{v} \cdot \bs{t}) \bs{t} \right] = \left[(\bs{v}^T\matr{L}) \cdot \bs{n}\right] - (\bs{v} \cdot \bs{n}) \bs{n} \cdot \left[\matr{L}\bs{n}\right].
\end{equation*}
Multiplying \eqref{eq:model3} by a test function $q \in P^h$ and integrating on $D(e)$ for $e \in \mathcal{F}_u$, performing integration by parts with the fact that $[q]\vert_e = 0$ for $e \in \mathcal{F}_u^0$, we deduce
\begin{equation*}
\begin{split}
\int_{D(e)} g \; q \;dx
=\int_{D(e)} (\text{div }\bs{u}) \, q \;dx
=-\int_{D(e)}\bs{u} \cdot \nabla q \;dx+\int_{\partial D(e)\cap\mathcal{F}_p}(\bs{u} \cdot \bs{n}) \, q \;ds.
\end{split}
\end{equation*}
Finally, we enforce normal continuity of $\matr{L}$ weakly on $\mathcal{F}_p$. For any $\widehat{\bs{v}} \in \widehat{U}^h$ and any $e \in \mathcal{F}_p$, we have
\begin{equation*}
 \int_{e} [\matr{L}\bs{n}] \cdot \widehat{\bs{v}}  \;ds = 0.
\end{equation*}

Combining the preceding derivations, now we can formulate our novel SDG formulation for the Brinkman problem \eqref{eq:model}: Find $(L_h,\bm{u}_h,\widehat{\bm{u}}_h,p_h)\in W^h\times U^h\times \widehat{U}^h\times P^h$ such that
\begin{equation}
\begin{split}
(L_h,G)&=\sqrt{\epsilon}B_h^*(\bm{u}_h,G)+\sqrt{\epsilon}T_h^*(\widehat{\bm{u}}_h,G)\quad \forall G\in W^h,\\ \sqrt{\epsilon}B_h(L_h,\bm{v})+(\alpha\bm{u}_h,\bm{v})+b_h^*(p_h,\bm{v})&=(\bm{f},\bm{v})\quad \forall \bm{v}\in U^h,\\
\draft{-}b_h(\bm{u}_h,q)&=(g,q)\quad \forall q\in P^h,\\
T_h(L_h,\widehat{\bm{v}})&=0\quad \forall \widehat{\bm{v}}\in \widehat{U}^h,
\end{split}
\label{eq:SDG-discrete}
\end{equation}
where the bilinear forms are defined by
\begin{align*}
B_h^*(\bm{v},G)&=-\int_\Omega \bm{v}\cdot\text{div}_h \matr{G}\;dx+\sum_{e\in \mathcal{F}_p}\int_e(\bm{v}\cdot \bm{n})\bm{n}\cdot [G\bm{n}]\;ds,\\
B_h(G,\bm{v})&=\int_\Omega G\cdot \nabla_h \bm{v}\;dx-\sum_{e\in \mathcal{F}_u}\int_e[\bm{v}]\cdot( G\bm{n})\;ds-\sum_{e\in \mathcal{F}_p}\int_e [(\bm{v}\cdot \bm{t})\bm{t}\cdot (G\bm{n})]\;ds,\\
T_h^*(\widehat{\bm{v}},G)&=\sum_{e\in \mathcal{F}_p}\int_e \widehat{\bm{v}}\cdot [G\bm{n}]\;ds,\\
T_h(G,\widehat{\bm{v}})&=\sum_{e\in \mathcal{F}_p}\int_e [G\bm{n}]\cdot \widehat{\bm{v}}\;ds,\\
b_h^*(q,\bm{v})&=-\int_\Omega q\,\text{div}_h \bm{v}\;dx+\sum_{e\in \mathcal{F}_u}\int_e q[\bm{v}\cdot\bm{n}]\;ds,\\
b_h(\bm{v},q)&=\int_\Omega \bm{v}\cdot\nabla_h q\;dx-\sum_{e\in \mathcal{F}_p}\int_e\bm{v}\cdot\bm{n}[q]\;ds.
\end{align*}
Performing integration by parts reveals the following adjoint properties
\begin{equation}
\begin{split}
B_h(G,\bm{v})&=B_h^*(\bm{v},G)\quad\, \forall (G,\bm{v})\in W^h\times U^h,\\
b_h(\bm{v},q)&=b_h^*(q,\bm{v})\qquad \forall (\bm{v},q)\in U^h\times P^h,\\
T_h(G,\widehat{\bm{v}})&=T_h^*(\widehat{\bm{v}},G)\hspace{0.6cm} \forall (G,\draft{\widehat{\bm{v}}})\in W^h\times \draft{\widehat{U}^h}.
\end{split}
\label{eq:adjoint}
\end{equation}

To facilitate the analysis, we define the subspace of $W^h$ by
\begin{align*}
\widehat{W}^h:=\{G\in W^h: \int_e \draft{[G\bm{n}] \cdot \hat{\bm{v}}}\;ds=0\quad \forall \hat{\bm{v}}\in \widehat{U}^h\draft{, \forall e\in\mathcal{F}_p}\}.
\end{align*}
The degrees of freedom for $\widehat{W}^h$ are given by

(WD1). For edge $e_1\in \mathcal{F}_u$
\begin{align}
\phi_{e_1}(G):= \int_{e_1} (G\bm{n})\draft{\cdot}\,p_k\;ds\quad \forall p_k\in P^k(e)^2\label{eq:dof1}
\end{align}
and for edge $e_2\in \mathcal{F}_p$
\begin{align}
\phi_{e_2}(G):= \int_{e_2} (G\bm{n})\cdot \bm{t}\;p_k\;ds\quad \forall p_k\in P^k(e)\label{eq:dof2}.
\end{align}
(WD2). For each $\tau\in \mathcal{T}_h$, we have
\begin{align*}
\phi_\tau(G):=\int_\tau G\draft{:}\, p_{k-1}\;dx\quad \forall p_{k-1}\in [P^{k-1}(\tau)]^{2\times 2}.
\end{align*}

\begin{lemma}
Any function $G\in \widehat{W}^h$ is uniquely determined by the degrees of freedom (WD1)-(WD2).
\end{lemma}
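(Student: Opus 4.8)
The plan is to establish unisolvence in the usual two steps: a dimension count showing that the number of functionals in (WD1)--(WD2) equals $\dim\widehat{W}^h$, followed by an injectivity argument showing that any field $G\in\widehat{W}^h$ annihilated by all of (WD1)--(WD2) must vanish. Before either step I would unpack the definition of $\widehat{W}^h$: since every $\widehat{\bm v}\in\widehat{U}^h$ restricts on a dual edge $e\in\mathcal{F}_p$ to a tangential polynomial vector $\psi\,\bm t$ with $\psi\in P^k(e)$, the constraint $\int_e[G\bm n]\cdot\widehat{\bm v}\,ds=0$ is equivalent to $[G\bm n]\cdot\bm t=0$ on $e$ (because $[G\bm n]\cdot\bm t\in P^k(e)$). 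Hence $\widehat{W}^h$ consists exactly of those matrix fields whose full normal trace $G\bm n$ is continuous across each $e\in\mathcal{F}_u^0$ and whose tangential--normal component $(G\bm n)\cdot\bm t$ is continuous across each $e\in\mathcal{F}_p$.

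For the dimension count I would argue triangle by triangle. On a single $\tau\in\mathcal{T}_h$, which carries exactly one primal edge and two dual edges, the functionals touching $\tau$ number $2(k+1)$ (full normal moments on the primal edge), $2(k+1)$ (tangential moments on the two dual edges), and $2k(k+1)$ (interior matrix moments against $[P^{k-1}(\tau)]^{2\times2}$), totalling $2(k+1)(k+2)=\dim P^k(\tau)^{2\times2}$. Globally the count then matches $\dim\widehat{W}^h$ upon invoking the incidence identity $|\mathcal{F}_u|+|\mathcal{F}_u^0|+|\mathcal{F}_p|=2|\mathcal{T}_h|$, obtained by counting edge--triangle incidences (each triangle has one primal and two dual edges, each interior primal edge and each dual edge being shared by two triangles), which reconciles the shared primal- and dual-edge functionals with the continuity constraints defining $\widehat{W}^h$.

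Granting equality of dimensions, it remains to prove injectivity, and here I would reduce the global statement to a local one. If $G\in\widehat{W}^h$ kills every functional, then (WD1) on the primal edges forces $G\bm n=0$ on each $e\in\mathcal{F}_u$ (using single-valuedness of $G\bm n$ on interior primal edges), (WD1) on the dual edges forces $(G\bm n)\cdot\bm t=0$ on each $e\in\mathcal{F}_p$, and (WD2) kills all interior moments; thus on every $\tau$ all local functionals vanish. It therefore suffices to prove \emph{local} unisolvence: on a single triangle, a field $G\in P^k(\tau)^{2\times2}$ with $G\bm n=0$ on the primal edge, $(G\bm n)\cdot\bm t=0$ on the two dual edges, and $\int_\tau G:p_{k-1}\,dx=0$ for all $p_{k-1}\in[P^{k-1}(\tau)]^{2\times2}$ is identically zero.

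This last step is the heart of the matter and the point I expect to be hardest, because the tangential-only conditions on the dual edges couple the two rows of $G$ and so obstruct a clean row-by-row reduction to a scalar $H(\mathrm{div})$/BDM element. My approach would be to integrate by parts: testing the interior orthogonality against gradients $\nabla\bm v$ with $\bm v\in P^k(\tau)^2$ yields $\int_\tau(\mathrm{div}\,G)\cdot\bm v\,dx=\int_{e_2}(\bm v\cdot\bm n)\,(G\bm n)\cdot\bm n\,ds+\int_{e_3}(\bm v\cdot\bm n)\,(G\bm n)\cdot\bm n\,ds$, the primal-edge and tangential contributions having dropped out. Specializing $\bm v$ to bubble-type functions vanishing on the two dual edges then constrains $\mathrm{div}\,G$, after which I would exploit the full interior orthogonality (strictly stronger than orthogonality to gradients alone for $k\ge2$) together with the boundary data, proceeding degree by degree as in the constant case $k=0$ --- where $G\bm n=0$ forces $G=\bm a\otimes\bm t$ of rank one and the two tangential conditions then force $\bm a=0$ --- to conclude $G\equiv0$. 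Verifying that this bubble/degree-descent argument closes uniformly in $k$ is the main technical obstacle.
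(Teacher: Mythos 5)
Your set-up is sound and matches the paper's: the reformulation of the constraint defining $\widehat{W}^h$ as continuity of $(G\bm n)\cdot\bm t$ across dual edges, the dimension count (your per-triangle tally $2(k+1)+2(k+1)+2k(k+1)=2(k+1)(k+2)$ and the incidence identity are equivalent to the global count in the paper), and the reduction of injectivity to a local unisolvence statement on a single triangle are all correct. But the local statement --- $G\in P^k(\tau)^{2\times 2}$ with $G\bm n=0$ on the primal edge, $(G\bm n)\cdot\bm t=0$ on the two dual edges, and all interior moments against $[P^{k-1}(\tau)]^{2\times 2}$ vanishing implies $G\equiv 0$ --- is exactly the nontrivial content of the lemma, and you do not prove it. You sketch an integration-by-parts/bubble-function/degree-descent strategy and then concede that ``verifying that this argument closes uniformly in $k$ is the main technical obstacle.'' That is a genuine gap, not a deferred routine verification: the difficulty you correctly identify (the uncontrolled normal--normal component $(G\bm n)\cdot\bm n$ on the dual edges, which couples the rows of $G$) is precisely what your sketched route runs into, and it is not clear it can be resolved along those lines.

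The paper closes this step by a purely algebraic device that avoids integration by parts altogether. Since the edge functionals vanish, each of the four scalar polynomials $G\bm n^{(1)}$ (two components), $(G\bm n^{(2)})\cdot\bm t^{(2)}$ and $(G\bm n^{(3)})\cdot\bm t^{(3)}$ --- each of degree $k$ on all of $\tau$ --- vanishes identically on its associated edge, hence factors as $\lambda_{\tau,i}\,q^{(j)}_{k-1}$ with $\lambda_{\tau,i}$ the barycentric coordinate vanishing on that edge and $q^{(j)}_{k-1}\in P^{k-1}(\tau)$. The $4\times 4$ constant matrix $A$ mapping the vectorized $G=(G_{11},G_{12},G_{21},G_{22})^T$ to these four quantities is invertible, so $G=A^{-1}(\lambda_{\tau,1}q^{(1)},\lambda_{\tau,1}q^{(2)},\lambda_{\tau,2}q^{(3)},\lambda_{\tau,3}q^{(4)})^T$. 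Taking the (WD2) test function $p_{k-1}=A^{-1}(q^{(1)},\dots,q^{(4)})^T$ then forces all $q^{(j)}=0$, hence $G=0$ on $\tau$. If you want to complete your proof, replace your bubble/degree-descent step with this factorization-plus-invertibility argument; everything else you wrote can stand.
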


\begin{proof}
The dimension of the space $P^k(e)$ is $k+1$ while the dimension of the space $P^k(\tau)$ is $(k+1)(k+2)/2$. First we have
\begin{align*}
\mbox{dim}(\widehat{W}^h)=2|\mathcal{T}_h|(k+1)(k+2)-2|\mathcal{F}_u^0|(k+1)
-|\mathcal{F}_p|(k+1),
\end{align*}
where the subtraction of the second and the third term results in the continuity of functions in $\widehat{W}^h$ on each edge $e\in \mathcal{F}_u^0$ and $e\in \mathcal{F}_p$, respectively. Let $|WD|$ be the total number of degrees of freedom associated with (WD1) and (WD2). Then we have
\begin{align*}
|WD|=2|\mathcal{F}_u|(k+1)+|\mathcal{F}_p|(k+1)+2|\mathcal{T}_h|k(k+1),
\end{align*}
where the terms on the right hand side denotes the number of degrees of freedom associated with (WD1) and (WD2), respectively. Subtracting, we have
\begin{align*}
\mbox{dim}(\widehat{W}^h)-|WD|
=2(k+1)(2|\mathcal{T}_h|-2|\mathcal{F}_u^0|-|\mathcal{F}_u\backslash\mathcal{F}_u^0|-|\mathcal{F}_p|).
\end{align*}
Note that we can associate each edge in $\mathcal{F}_u^0$ to two triangle in $\mathcal{T}_h$, and associate each edge in $\mathcal{F}_u\backslash \mathcal{F}_u^0$ to one triangle, and  associate each edge in $\mathcal{F}_p$ to one triangle in $\mathcal{T}_h$. Thus, we have $2|\mathcal{F}_u^0|+|\mathcal{F}_u\backslash\mathcal{F}_u^0|+|\mathcal{F}_p|=2|\mathcal{T}_h|$. Hence, we have $\mbox{dim}(\draft{\widehat{W}^h})=|WD|$.

Since $\mbox{dim}(\widehat{W}^h)=|WD|$, it suffices to show uniqueness. Since $G\in \widehat{W}^h$ is defined such that all degrees of freedom associated with both (WD1) and (WD2) are equal to zero. That is
\begin{align}
\phi_{e_1}(G)&=0\quad \forall e_1\in \mathcal{F}_u,\\
\phi_{e_2}(G)& =0\quad \forall e_2\in \mathcal{F}_p,\\
\phi_\tau(G)&=0\quad \forall \tau\in \mathcal{T}_h\label{degreeG3}.
\end{align}
Let $\bm{n}^{(i)},i=1,\cdots,3$ denote the three unit normal vectors of three edges of each triangle $\tau\in \mathcal{T}_h$, where $\bm{n}^{(1)}$ denotes the unit normal vector for $e\in \partial \tau\cap \mathcal{F}_u$ and the remaining two associate to the edges in $\partial \tau\cap \mathcal{F}_p$ with counterclockwise orientation. Write $\bm{n}^{(i)}=(n^{(i)}_1,n^{(i)}_2)^T$.
Then we can take by following \eqref{eq:dof1} and \eqref{eq:dof2}
\begin{align*}
G\bm{n}^{(1)}=(\lambda_{\tau,1}q_{k-1}^{(1)}, \lambda_{\tau,1}q_{k-1}^{(2)})^T
\end{align*}
for $e\in \mathcal{F}_u$ and
\begin{align*}
(G\bm{n}^{(2)})\cdot \bm{t}^{(2)}=\lambda_{\tau,2}q_{k-1}^{(3)}, \quad (G\bm{n}^{(3)})\cdot \bm{t}^{(3)}=\draft{\lambda_{\tau,3}}q_{k-1}^{(4)}
\end{align*}
for $e\in \mathcal{F}_p$. \draft{Here, $q_{k-1}^{(j)},j=1,\cdots,4$, are to be determined. And $\lambda_{\tau,i},i=1,\cdots,3$, are linear functions defined on $\tau$ such that $\lambda_{\tau,i}=1$ at vertex $i$ and vanishes at the remaining vertices.} Let $A$ be the matrix such that
\begin{align*}
A=\left(
    \begin{array}{cccc}
      n^{(1)}_1 & n^{(1)}_2   & 0       & 0 \\
      0       & 0           & n^{(1)}_1 & n^{(1)}_2 \\
      t^{(2)}_1n^{(2)}_1 & t^{(2)}_1n^{(2)}_2 & t^{(2)}_2n^{(2)}_1 & t^{(2)}_2n^{(2)}_2 \\
       t^{(3)}_1n^{(3)}_1& t^{(3)}_1n^{(3)}_2  & t^{(3)}_2n^{(3)}_1 & t^{(3)}_2n^{(3)}_2 \\
    \end{array}
  \right).
\end{align*}
We can check that $A$ is invertible. So we can write (here we rewrite
$G=\left(
     \begin{array}{cc}
       G_{11} & G_{12} \\
       G_{21} & G_{22} \\
     \end{array}
   \right)
$ in a collum-wise way, i.e., $G=(G_{11},G_{12},G_{21},G_{22})^T$)
\begin{align*}
AG =\left(
        \begin{array}{c}
          \lambda_{\tau,1}q_{k-1}^{(1)}\\
          \lambda_{\tau,1}q_{k-1}^{(2)}\\
          \lambda_{\tau,2}q_{k-1}^{(3)}\\
          \lambda_{\tau,3}q_{k-1}^{(4)} \\
        \end{array}
      \right),\quad G = A^{-1} \left(
        \begin{array}{c}
          \lambda_{\tau,1}q_{k-1}^{(1)} \\
          \lambda_{\tau,1}q_{k-1}^{(2)} \\
          \lambda_{\tau,2}q_{k-1}^{(3)} \\
          \lambda_{\tau,3}q_{k-1}^{(4)} \\
        \end{array}
      \right).
\end{align*}
Then we also take $p_{k-1}$ in a collum-wise way as below
\begin{align*}
p_{k-1}=A^{-1}\left(
                 \begin{array}{cc}
                   q_{k-1}^{(1)} \\
                    q_{k-1}^{(2)} \\
                   q_{k-1}^{(3)} \\
                    q_{k-1}^{(4)} \\
                 \end{array}
               \right).
\end{align*}
It follows from \eqref{degreeG3} that
\begin{align*}
0=(G, p_{k-1})_\tau&=\int_\tau \left(
        \begin{array}{c}
          \lambda_{\tau,1}q_{k-1}^{(1)}\\
          \lambda_{\tau,1}q_{k-1}^{(2)}\\
          \lambda_{\tau,2}q_{k-1}^{(3)}\\
          \lambda_{\tau,3}q_{k-1}^{(4)} \\
        \end{array}
      \right)^TA^{-T}A^{-1}\left(
                 \begin{array}{cc}
                   q_{k-1}^{(1)} \\
                    q_{k-1}^{(2)} \\
                   q_{k-1}^{(3)} \\
                    q_{k-1}^{(4)} \\
                 \end{array}
               \right)\;dx\\
               &\geq C \int_\tau \left(
                 \begin{array}{cc}
                   q_{k-1}^{(1)} \\
                    q_{k-1}^{(2)} \\
                   q_{k-1}^{(3)} \\
                    q_{k-1}^{(4)} \\
                 \end{array}
               \right)^{T} A^{-T}A^{-1}\left(
                 \begin{array}{cc}
                   q_{k-1}^{(1)} \\
                    q_{k-1}^{(2)} \\
                   q_{k-1}^{(3)} \\
                    q_{k-1}^{(4)} \\
                 \end{array}
               \right)\;dx
=C \int_\tau |p_{k-1}|^2\;dx.
\end{align*}
Consequently, we have $q_{k-1}^{(j)}=0$ for $j=1,\cdots,4$. Thus $G=0$ on $\tau\in \mathcal{T}_h$.
\end{proof}

Based on the definition of $\widehat{W}^h$ and the discrete formulation \eqref{eq:SDG-discrete}, we can conclude that $L\in \widehat{W}^h$. Therefore, we can reformulate our discrete formulation \eqref{eq:SDG-discrete} and obtain the following equivalent formulation: find $\draft{(L_h,\bm{u}_h,p_h)\in \widehat{W}^h\times U^h\times P^h}$ such that
\begin{equation}
\begin{split}
(L_h,G)&=\sqrt{\epsilon}B_h^*(\bm{u}_h,G)\quad \forall G\in \widehat{W}^h, \\ \sqrt{\epsilon}B_h(L_h,\bm{v})+(\alpha\draft{\bm{u}_h},\bm{v})+b_h^*(p_h,\bm{v})&=(\bm{f},\bm{v})\quad \forall \bm{v}\in U^h,\\
b_h(\bm{u}_h,q)&=(g,q)\quad \forall q\in P^h.
\end{split}
\label{eq:SDG-discrete-new}
\end{equation}
\begin{remark}
{\rm The current method is different from original SDG method proposed in \cite{ChungWave2,LinaPark,LinaParkShin}, where the velocity is continuous over the primal edges. Our (undisplayed) numerical results indicate that the original SDG methods fails to deliver stable results when the Brinkman problem becomes Darcy dominating. To remedy this, we relax the tangential continuity of the velocity, in addition, we also make simultaneous modifications for the velocity gradient in order to guarantee the well-posedness of the proposed method.}

\end{remark}
Before closing this section, we provide some basic ingredients which will be employed in the subsequent analysis. First, the following inf-sup condition holds as introduced in \cite{ChungWave2}
\begin{align}
\inf_{q\in P^h}\sup_{\bm{v}\in U^h}\frac{b_h(\bm{v},q)}{\|\bm{v}\|_{0}\normmm{q}_{1,h}}\geq C.\label{eq:infsupbh}
\end{align}
Next, we introduce the following discrete $L^2$ norm for $\widehat{W}^h$
\begin{align*}
\|G\|_{X'}^2& = \|G\|_0^2+\sum_{e\in \mathcal{F}_u}h_e\|G\bm{n}\|_{0,e}^2+\sum_{e\in \mathcal{F}_p} h_e\|(G\bm{n})\cdot \bm{t}\|_{0,e}^2\quad \forall G\in \widehat{W}^h.
\end{align*}
Scaling arguments imply the following norm equivalentce
\begin{align}
\|G\|_0\leq \|G\|_{X'}\leq C \|G\|_0.\label{eq:scalingG}
\end{align}

\section{Error analysis}\label{sec:error}
In this section, we will analyze the unique solvability of the discrete system \eqref{eq:SDG-discrete} and prove the convergence estimates. To this end, we begin with proving the following inf-sup condition.



\begin{theorem}
There exists a uniform constant $C>0$ such that
\begin{align}
\inf_{\bm{v}\in U^h}\sup_{G\in \widehat{W}^h}\frac{B_h(G,\bm{v})}{\|G\|_{X'}\|\bm{v}\|_{Z_2}}\geq C.\label{eq:inf-supB}
\end{align}

\end{theorem}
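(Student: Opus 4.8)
The plan is to establish \eqref{eq:inf-supB} by an explicit Fortin-type construction: for an arbitrary $\bm{v}\in U^h$ I would exhibit a single tensor $G\in\widehat{W}^h$ satisfying $B_h(G,\bm{v})\geq C\|\bm{v}\|_{Z_2}^2$ and $\|G\|_{X'}\leq C\|\bm{v}\|_{Z_2}$. The quotient in \eqref{eq:inf-supB} is then bounded below by a mesh-independent constant, and passing to the supremum over $G$ and the infimum over $\bm{v}$ concludes. The decisive observation is that the three quantities measured by $\|\bm{v}\|_{Z_2}$ (namely $\nabla_h\bm{v}$, the jumps $[\bm{v}]$ on $\mathcal{F}_u$, and the tangential jumps $[\bm{v}\cdot\bm{t}]$ on $\mathcal{F}_p$) are in one-to-one correspondence with the degrees of freedom (WD1)--(WD2) that, by the preceding Lemma, fix an element of $\widehat{W}^h$.

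Concretely, given $\bm{v}$ I would define $G$ through its degrees of freedom as follows: via (WD2) set $\int_\tau G:p_{k-1}\,dx=\int_\tau\nabla\bm{v}:p_{k-1}\,dx$ for all $p_{k-1}\in[P^{k-1}(\tau)]^{2\times2}$, i.e. $\Pi_{k-1}G\vert_\tau=\nabla\bm{v}\vert_\tau$; via (WD1) on $\mathcal{F}_u$ set $(G\bm{n})\vert_e=-h_e^{-1}[\bm{v}]\vert_e$, which is admissible since $[\bm{v}]\in P^k(e)^2$ and $G\bm{n}$ is single-valued on $\mathcal{F}_u^0$; and via the tangential part of (WD1) on $\mathcal{F}_p$ set $(G\bm{n})\cdot\bm{t}\vert_e=-h_e^{-1}[\bm{v}\cdot\bm{t}]\vert_e$. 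These prescriptions are independent precisely because (WD1)--(WD2) are unisolvent. With this choice the three terms of $B_h(G,\bm{v})$ are evaluated directly: because $\nabla\bm{v}\vert_\tau\in[P^{k-1}(\tau)]^{2\times2}$, the volume term equals $\int_\Omega\Pi_{k-1}G:\nabla_h\bm{v}\,dx=\|\nabla_h\bm{v}\|_0^2$; the $\mathcal{F}_u$-term equals $\sum_{e\in\mathcal{F}_u}h_e^{-1}\|[\bm{v}]\|_{0,e}^2$; and, using that $G\bm{n}$ has continuous tangential component on $\mathcal{F}_p$ for $G\in\widehat{W}^h$ so that $[(\bm{v}\cdot\bm{t})\bm{t}\cdot(G\bm{n})]=((G\bm{n})\cdot\bm{t})[\bm{v}\cdot\bm{t}]$, together with $\|[(\bm{v}\cdot\bm{t})\bm{t}]\|_{0,e}=\|[\bm{v}\cdot\bm{t}]\|_{0,e}$, the $\mathcal{F}_p$-term equals $\sum_{e\in\mathcal{F}_p}h_e^{-1}\|[\bm{v}\cdot\bm{t}]\|_{0,e}^2$. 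Hence $B_h(G,\bm{v})=\|\bm{v}\|_{Z_2}^2$.

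It remains to bound $\|G\|_{X'}\leq C\|\bm{v}\|_{Z_2}$. The two edge contributions to $\|G\|_{X'}^2$ are immediate from the construction, since $\sum_{e\in\mathcal{F}_u}h_e\|G\bm{n}\|_{0,e}^2=\sum_{e\in\mathcal{F}_u}h_e^{-1}\|[\bm{v}]\|_{0,e}^2$ and $\sum_{e\in\mathcal{F}_p}h_e\|(G\bm{n})\cdot\bm{t}\|_{0,e}^2=\sum_{e\in\mathcal{F}_p}h_e^{-1}\|[\bm{v}\cdot\bm{t}]\|_{0,e}^2$, both controlled by $\|\bm{v}\|_{Z_2}^2$. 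The genuinely technical point, and the main obstacle, is the control of $\|G\|_0$. For this I would pass to a reference triangle $\widehat{\tau}$ and invoke that (WD1)--(WD2) form a unisolvent set on $[P^k(\widehat{\tau})]^{2\times2}$ (the dimension count and the invertibility of the matrix $A$ are exactly those in the Lemma), so that the finite-dimensional norm equivalence $\|\widehat{G}\|_{0,\widehat{\tau}}^2\leq C(\|\Pi_{k-1}\widehat{G}\|_{0,\widehat{\tau}}^2+\|\widehat{G}\widehat{\bm{n}}^{(1)}\|_{0,\widehat{e}_1}^2+\sum_{i=2,3}\|(\widehat{G}\widehat{\bm{n}}^{(i)})\cdot\widehat{\bm{t}}^{(i)}\|_{0,\widehat{e}_i}^2)$ holds. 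Scaling back with the shape-regularity hypotheses ($h_e\geq Ch_{S(\nu)}$ and $h_\tau\sim h_e$) turns this into $\|G\|_{0,\tau}^2\leq C(\|\Pi_{k-1}G\|_{0,\tau}^2+\sum_{e\subset\partial\tau\cap\mathcal{F}_u}h_e\|G\bm{n}\|_{0,e}^2+\sum_{e\subset\partial\tau\cap\mathcal{F}_p}h_e\|(G\bm{n})\cdot\bm{t}\|_{0,e}^2)$, and summing over $\tau$ gives $\|G\|_0^2\leq C\|\bm{v}\|_{Z_2}^2$ in view of $\Pi_{k-1}G=\nabla_h\bm{v}$ and the edge identities above.

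Combining the two estimates yields $\dfrac{B_h(G,\bm{v})}{\|G\|_{X'}\|\bm{v}\|_{Z_2}}\geq\dfrac{\|\bm{v}\|_{Z_2}^2}{C\|\bm{v}\|_{Z_2}\,\|\bm{v}\|_{Z_2}}=C>0$, which is \eqref{eq:inf-supB}. In summary, once the degrees of freedom of $G$ are chosen to mirror the three quantities entering $\|\bm{v}\|_{Z_2}$, the lower bound on $B_h(G,\bm{v})$ and the edge part of $\|G\|_{X'}$ are exact computations, and the only delicate ingredient is the reference-element norm equivalence together with its correct scaling, which furnishes the $L^2$ control of the constructed tensor.
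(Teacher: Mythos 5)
Your proposal is correct and follows essentially the same route as the paper: construct $G\in\widehat{W}^h$ through the degrees of freedom (WD1)--(WD2) so that the volume moments reproduce $\nabla_h\bm{v}$, the normal traces on $\mathcal{F}_u$ equal $-h_e^{-1}[\bm{v}]$, and the tangential traces on $\mathcal{F}_p$ equal $-h_e^{-1}[\bm{v}\cdot\bm{t}]$, giving $B_h(G,\bm{v})=\|\bm{v}\|_{Z_2}^2$ and $\|G\|_{X'}\leq C\|\bm{v}\|_{Z_2}$ by scaling. Your write-up merely makes explicit the reference-element norm equivalence behind the scaling step that the paper invokes without detail.
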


\begin{proof}

Since $G\in \widehat{W}^h$, $B_h$ can be rewritten as
\begin{align*}
B_h(G,\bm{v})&=\int_\Omega G:\nabla \bm{v}_h\;dx-\sum_{e\in \mathcal{F}_u}\int_e[\bm{v}]\cdot( G\bm{n})\;ds-\sum_{e\in \mathcal{F}_p}\int_e [(\bm{v}\cdot \bm{t})](G\bm{n})\cdot\bm{t}\;ds.
\end{align*}
It suffices to find $G\in \widehat{W}^h$ such that
\begin{align*}
B_h(G,\bm{v})\geq C \|\bm{v}\|_{Z_2}^2\quad \mbox{and} \quad \|G\|_{X'}\leq C \|\bm{v}\|_{Z_2}.
\end{align*}
We define $G\in \widehat{W}^h$ corresponding to the degrees of freedom (WD1)-(WD2) such that
\begin{align*}
(G,\nabla \bm{v})_\tau&=\|\nabla \bm{v}\|_{0,\tau}^2\quad \draft{\forall} \tau\in \mathcal{T}_h,\\
-(G\bm{n}, [\bm{v}])_e&=h_e^{-1}\|[\bm{v}]\|_{0,e}^2\quad \forall e\in \mathcal{F}_u,\\
-((G\bm{n})\cdot \bm{t},[\bm{v}\cdot \bm{t}])_e &=h_e^{-1}\|[(\bm{v}\cdot\bm{t})\bm{t}]\|_{0,e}^2\quad \forall e\in \mathcal{F}_p.
\end{align*}
We can obtain from the definition of $B_h(\cdot,\cdot)$ that
\begin{align*}
B_h(G,\bm{v})= \|\bm{v}\|_{Z_2}^2.
\end{align*}
In addition, an application of scaling argument implies that
\begin{align*}
\|G\|_{X'}\leq C \|\bm{v}\|_{Z_2}.
\end{align*}
Therefore, the proof is complete.
\end{proof}

\begin{theorem}(existence and uniqueness).
The system \eqref{eq:SDG-discrete} admits a unique solution.

\end{theorem}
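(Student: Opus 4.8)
The plan is to exploit the fact that \eqref{eq:SDG-discrete} is a square finite-dimensional linear system, so existence is equivalent to uniqueness, and it suffices to show that the only solution of the homogeneous problem (with $\bm{f}=\bm{0}$ and $g=0$) is the trivial one. I would work with the reduced formulation \eqref{eq:SDG-discrete-new}, which the excerpt has already shown to be equivalent to \eqref{eq:SDG-discrete} via the observation that $L_h\in\widehat{W}^h$; so the goal becomes to prove that $(L_h,\bm{u}_h,p_h)=(\bm{0},\bm{0},0)$ whenever $\bm{f}=\bm{0}$ and $g=0$.

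The core of the argument is an energy identity obtained by testing with the unknowns themselves. First I would take $G=L_h$ in the first equation of \eqref{eq:SDG-discrete-new} to get $\|L_h\|_0^2=\sqrt{\epsilon}\,B_h^*(\bm{u}_h,L_h)$, and then invoke the adjoint property $B_h^*(\bm{u}_h,L_h)=B_h(L_h,\bm{u}_h)$ from \eqref{eq:adjoint} to rewrite this as $\sqrt{\epsilon}\,B_h(L_h,\bm{u}_h)=\|L_h\|_0^2$. Next I would take $\bm{v}=\bm{u}_h$ in the second equation (with $\bm{f}=\bm{0}$) and $q=p_h$ in the third (with $g=0$). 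The latter gives $b_h(\bm{u}_h,p_h)=0$, hence by the adjoint relation $b_h^*(p_h,\bm{u}_h)=0$. Substituting $\sqrt{\epsilon}\,B_h(L_h,\bm{u}_h)=\|L_h\|_0^2$ and $b_h^*(p_h,\bm{u}_h)=0$ into the second equation collapses it to
\begin{align*}
\|L_h\|_0^2+(\alpha\bm{u}_h,\bm{u}_h)=0.
\end{align*}
Since $\alpha\geq\alpha_{\text{min}}>0$, the second term is bounded below by $\alpha_{\text{min}}\|\bm{u}_h\|_0^2\geq 0$, so both nonnegative terms vanish, forcing $L_h=\bm{0}$ and $\bm{u}_h=\bm{0}$ (the latter in $L^2$, hence identically, as $\bm{u}_h$ is piecewise polynomial). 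It is worth emphasising that here the zeroth-order reaction term $(\alpha\cdot,\cdot)$ supplies the needed coercivity directly, so the $B_h$ inf-sup condition \eqref{eq:inf-supB} is not required for well-posedness (it will be needed only for the $\epsilon$-uniform stability estimates later).

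It remains to recover $p_h=0$. With $L_h=\bm{0}$ and $\bm{u}_h=\bm{0}$, the second equation of \eqref{eq:SDG-discrete-new} reduces to $b_h^*(p_h,\bm{v})=0$ for all $\bm{v}\in U^h$, equivalently $b_h(\bm{v},p_h)=0$ for all $\bm{v}\in U^h$ by \eqref{eq:adjoint}. Applying the inf-sup condition \eqref{eq:infsupbh} then yields $C\normmm{p_h}_{1,h}\leq\sup_{\bm{v}\in U^h}b_h(\bm{v},p_h)/\|\bm{v}\|_0=0$, so $\normmm{p_h}_{1,h}=0$. The one point I would take care to verify is that $\normmm{\cdot}_{1,h}$ is genuinely a norm on $P^h$: if $\|\nabla_h p_h\|_0=0$ then $p_h$ is piecewise constant, continuity across $e\in\mathcal{F}_u^0$ is built into the definition of $P^h$, and $\sum_{e\in\mathcal{F}_p}h_e^{-1}\|[p_h]\|_{0,e}^2=0$ forces agreement across the dual edges as well; since the triangulation is connected through primal and dual edges, $p_h$ is globally constant, and the zero-mean constraint $\int_\Omega p_h\,dx=0$ then gives $p_h=0$. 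The argument is essentially a standard saddle-point uniqueness computation, so I do not expect a serious obstacle; the only delicate bookkeeping is keeping the adjoint identities \eqref{eq:adjoint} aligned when substituting, and confirming the norm property of $\normmm{\cdot}_{1,h}$ that underpins the final step.
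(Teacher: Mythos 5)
Your energy argument and the pressure step mirror the paper's proof almost exactly: test with the unknowns themselves, use the adjoint identities \eqref{eq:adjoint} to cancel the cross terms, obtain $\|L_h\|_0^2+(\alpha\bm{u}_h,\bm{u}_h)=0$, and then recover $p_h=0$ from the inf-sup condition \eqref{eq:infsupbh} (the paper invokes the discrete Poincar\'{e} inequality $\|p_h\|_0\leq C\normmm{p_h}_{1,h}$ where you instead verify directly that $\normmm{\cdot}_{1,h}$ is a norm on $P^h$; both are acceptable).

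There is, however, a genuine gap: the theorem concerns the full square system \eqref{eq:SDG-discrete}, whose unknowns are $(L_h,\bm{u}_h,\widehat{\bm{u}}_h,p_h)\in W^h\times U^h\times\widehat{U}^h\times P^h$, and your proof never shows $\widehat{\bm{u}}_h=0$. By passing immediately to the reduced formulation \eqref{eq:SDG-discrete-new} you discard exactly the information that determines $\widehat{\bm{u}}_h$: the reduced system only retains the first equation tested against $G\in\widehat{W}^h$, for which the term $T_h^*(\widehat{\bm{u}}_h,G)$ vanishes identically, so $\widehat{\bm{u}}_h$ drops out entirely. Proving $(L_h,\bm{u}_h,p_h)=(\bm{0},\bm{0},0)$ therefore does not establish injectivity of the full square system, and without that the ``uniqueness implies existence'' argument does not close. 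The missing step (which the paper supplies) is short: once $L_h=0$, $\bm{u}_h=\bm{0}$ and $p_h=0$, the first equation of \eqref{eq:SDG-discrete} reduces to $T_h^*(\widehat{\bm{u}}_h,G)=\sum_{e\in\mathcal{F}_p}\int_e\widehat{\bm{u}}_h\cdot[G\bm{n}]\,ds=0$ for all $G\in W^h$; since $\widehat{\bm{u}}_h\cdot\bm{n}=0$ on each $e\in\mathcal{F}_p$ and the tangential component of $[G\bm{n}]$ on dual edges is a free degree of freedom of $W^h$, one can choose $G$ with $[G\bm{n}]\vert_e=\widehat{\bm{u}}_h$, which forces $\widehat{\bm{u}}_h=0$. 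With that addition your argument is complete and coincides with the paper's.
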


\begin{proof}

\eqref{eq:SDG-discrete} is a square linear system, uniqueness implies existence, thus it suffices to show uniqueness. Let $\bm{f}=0$ and $g=0$ in \eqref{eq:SDG-discrete} and taking $G = L_h, \bm{v}=\bm{u}_h,q=p_h,\widehat{\bm{v}}=\widehat{\bm{u}}_h$ and adding the resulting equations yield
\begin{align*}
\|L_h\|_0^2+\|\alpha\bm{u}_h\|_0^2=0.
\end{align*}
Thus, $L=0$ and $\bm{u}_h=\bm{0}$. Consequently, the second equation of \eqref{eq:SDG-discrete} is reduced to
\begin{align*}
b_h^*(p_h,\bm{v})=0\quad \forall \bm{v}\in U^h.
\end{align*}
In view of inf-sup condition \eqref{eq:infsupbh} and the discrete Poincar\'{e} inequality (cf. \cite{Brenner03}), we can obtain
\begin{align*}
\|p_h\|_0\leq C\normmm{p_h}_{1,h}&\leq C \sup_{\bm{v}\in U^h}\frac{b_h(\bm{v},p_h)}{\|\bm{v}\|_{0}} = C \sup_{\bm{v}\in U^h}\frac{b_h^*(p_h,\bm{v})}{\|\bm{v}\|_{0}}=0.
\end{align*}
Therefore, $p_h=0$. Finally, we have
\begin{align*}
\draft{T_h^*}(\widehat{\bm{u}}_h,G)=0.
\end{align*}
We can take $[G\bm{n}]\mid_e=\widehat{\bm{u}}_h\in \widehat{U}^h,e\in \mathcal{F}_p$. Therefore, $\widehat{\bm{u}}_h=0$. The assertion follows.

\end{proof}

The inf-sup condition \eqref{eq:inf-supB} implies the existence of an interpolation operator $\Pi_h:\draft{[H^1(\Omega)]^{2\times2}}\rightarrow \widehat{W}^h$ such that
\begin{align}
B_h(L-\Pi_hL,\bm{v})=0\quad \forall \bm{v}\in U^h.\label{eq:BhPi}
\end{align}
Also, \eqref{eq:inf-supB} is equivalent to the following
\begin{align*}
\inf_{G\in \widehat{W}^h}\sup_{\bm{v}\in U^h}\frac{B_h(G,\bm{v})}{\|G\|_{X'}\|\bm{v}\|_{Z_2}}\geq C.
\end{align*}
Consequently, we have for any polynomial $p_k\in [P^k]^{2\times 2}$
\begin{align*}
\|p_k-\Pi_hp_k\|_{X'}\leq C \sup_{\bm{v}\in U^h}\frac{B_h(p_k-\Pi_hp_k,\bm{v})}{\|\bm{v}\|_{Z_2}}=0.
\end{align*}
Thus, $p_k=\Pi_hp_k$. By the standard theory for polynomial preserving operators and the trace inequalies (cf. \cite{Ciarlet78,ChungWave2}), we obtain
\begin{align}
\|L-\Pi_hL\|_0&\leq C h^{k+1}|L|_{k+1},\label{eq:Lerror}\\
\|L-\Pi_h L\|_{Z'}&\leq C h^k |L|_{k+1}.\label{eq:Lerror2}
\end{align}
To facilitate later analysis, we also need to define the following two projection operators.
Let $I_h: H^1(\Omega)\rightarrow P^h$ be defined by
\begin{align*}
	(I_h q-q,\phi)_e
		&=0 \quad \forall \phi\in P^k(e),\draft{\forall} e\in \mathcal{F}_{u},\\
	(I_hq-q,\phi)_\tau
		&=0\quad \forall \phi\in P^{k-1}(\tau),\draft{\forall} \tau\in \mathcal{T}_h
\end{align*}
and $J_h: \draft{H^1(\Omega)^2}\rightarrow U^h$ be defined by
\begin{align*}
	((J_h\bm{v}-\bm{v})\cdot\bm{n},\varphi)_e
		&=0\quad \forall \varphi\in P^{k}(e),\ \forall e\in \mathcal{F}_{p},\\
	(J_h\bm{v}-\bm{v}, \bm{\phi})_\tau
		&=0\quad \forall \bm{\phi}\in P^{k-1}(\tau)^2,\ \forall \tau\in \mathcal{T}_h.
\end{align*}
It is easy to see that $I_h$ and $J_h$ are well defined polynomial preserving operators. In addition, the following approximation properties hold for $q\in H^{k+1}(\Omega)$ and $\bm{v}\in H^{k+1}(\Omega)^2$ (cf. \cite{Ciarlet78,ChungWave2})
\begin{align}
\normmm{q-I_hq}_{0,h}&\leq C h^{k+1}|q|_{k+1},\label{eq:Perror}\\
\|\bm{v}-J_h\bm{v}\|_{X_1}&\leq C h^{k+1}|\bm{v}|_{k+1}\label{eq:uerror},\\\
\|\bm{v}-J_h\bm{v}\|_{Z_1}&\leq Ch^k |\bm{v}|_{k+1}\label{eq:uerrorZ1},\\
\|\bm{v}-J_h\bm{v}\|_{Z_2}&\leq Ch^k |\bm{v}|_{k+1}\label{eq:uerrorZ}.
\end{align}
Furthermore, the following equations are satisfied
\begin{align}
b_h(\bm{v}-J_h\bm{v},q)&=0\quad \forall q\in P^h, \label{eq:bhJ}\\
b_h^*(q-I_hq,\bm{v})&=0\quad \forall \bm{v}\in U^h. \label{eq:bhI}
\end{align}
Also, we define $\pi_h\draft{:L^2(\Omega)^2\rightarrow\mathbb{R}^2}$ by
\begin{align*}
\pi_h \bm{\phi}\mid_\tau = \frac{1}{|\tau|}\int_\tau \bm{\phi}\;dx,
\end{align*}
which satisfies
\begin{align}
\|\bm{\phi}-\pi_h\bm{\phi}\|_{0,\tau}\leq C h_\tau \|\nabla \bm{\phi}\|_{0,\tau}\quad \forall \bm{\phi}\in H^1(\tau)^2.\label{eq:pih}
\end{align}

The following lemma will also be needed in later analysis.
\begin{lemma}\label{lem:estimate}
The following estimates hold \draft{ for any $\bm{v}\in H^1_0(\Omega)^2$}
\begin{align}
\|J_h\bm{v}\|_0&\leq C\|\bm{v}\|_1,\quad \|\nabla (J_h\bm{v})\|_0\leq C \|\bm{v}\|_1,\label{eq:Jh1}\\
\Big(\sum_{e\in \mathcal{F}_u}h_e^{-1}\|[J_h\bm{v}]\|_{0,e}^2\Big)^{1/2}&\leq C  \|\bm{v}\|_1,\quad \Big(\sum_{e\in \mathcal{F}_p}h_e^{-1}\|[(J_h\bm{v}\cdot \bm{t})\bm{t}]\|_{0,e}^2\Big)^{1/2}\leq C \|\bm{v}\|_1.\label{eq:Jh2}
\end{align}

\end{lemma}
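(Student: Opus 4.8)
The plan is to reduce everything to local estimates on each triangle $\tau\in\mathcal{T}_h$ through a reference-element and scaling argument, exploiting two structural facts about $J_h$. First, $J_h$ is defined locally: the value $J_h\bm{v}|_\tau$ is fixed by the interior moments of $\bm{v}$ up to degree $k-1$ on $\tau$ together with the normal traces of $\bm{v}$ on the two edges $e\in\partial\tau\cap\mathcal{F}_p$ (the dimension count $\dim P^k(\tau)^2=(k+1)(k+2)=k(k+1)+2(k+1)$ confirms that these functionals are unisolvent on $\tau$). Second, $J_h$ reproduces polynomials of degree $\le k$, in particular constant vectors. The engine of the argument is the boundedness of the local operator on the reference triangle $\hat\tau$: each defining functional is a bounded linear functional on $H^1(\hat\tau)^2$ — the boundary normal moments through the trace inequality — so $\|\hat{J}\hat{\bm{v}}\|_{0,\hat\tau}\le C\|\hat{\bm{v}}\|_{1,\hat\tau}$. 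The mesh regularity assumptions (star-shapedness of $S(\nu)$ and $h_e\ge Ch_{S(\nu)}$) guarantee that the triangles $\tau$ are shape-regular, so the scaling constants are uniform.

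First I would transport this reference estimate back to a general $\tau$ by the usual affine scaling, obtaining $\|J_h\bm{v}\|_{0,\tau}\le C(\|\bm{v}\|_{0,\tau}+h_\tau|\bm{v}|_{1,\tau})$; squaring and summing over $\tau$ gives $\|J_h\bm{v}\|_0\le C\|\bm{v}\|_1$, the first bound in \eqref{eq:Jh1}. For the gradient estimate I would use that $J_h$ preserves constants: for any constant vector $\bm{c}$ one has $\nabla J_h\bm{v}=\nabla J_h(\bm{v}-\bm{c})$ on $\tau$, so an inverse inequality on the polynomial $J_h(\bm{v}-\bm{c})$ followed by the local $L^2$ stability yields $\|\nabla J_h\bm{v}\|_{0,\tau}\le Ch_\tau^{-1}(\|\bm{v}-\bm{c}\|_{0,\tau}+h_\tau|\bm{v}|_{1,\tau})$. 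Choosing $\bm{c}=\pi_h\bm{v}|_\tau$ and invoking \eqref{eq:pih} makes the first term $O(h_\tau|\bm{v}|_{1,\tau})$, so $\|\nabla J_h\bm{v}\|_{0,\tau}\le C|\bm{v}|_{1,\tau}$, and summation gives the second bound in \eqref{eq:Jh1}.

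For the two jump terms in \eqref{eq:Jh2} the crucial simplification is that $\bm{v}\in H^1_0(\Omega)^2$ is single-valued across every interior edge and vanishes on $\partial\Omega$, so $[\bm{v}]=0$ on $e\in\mathcal{F}_u$ and $[(\bm{v}\cdot\bm{t})\bm{t}]=0$ on $e\in\mathcal{F}_p$. Hence $[J_h\bm{v}]=[J_h\bm{v}-\bm{v}]$, and likewise for the tangential jump, and both are controlled by the one-sided quantities $\|(J_h\bm{v}-\bm{v})|_\tau\|_{0,e}$. I would bound each of these with the trace inequality $\|w\|_{0,e}^2\le C(h_\tau^{-1}\|w\|_{0,\tau}^2+h_\tau\|\nabla w\|_{0,\tau}^2)$ applied to $w=J_h\bm{v}-\bm{v}$, and then use the local approximation estimate $\|J_h\bm{v}-\bm{v}\|_{0,\tau}\le Ch_\tau|\bm{v}|_{1,\tau}$ (again via constant preservation and \eqref{eq:pih}) together with the already-established $\|\nabla J_h\bm{v}\|_{0,\tau}\le C|\bm{v}|_{1,\tau}$. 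Using $h_e\sim h_\tau$, the factor $h_e^{-1}$ is absorbed and one arrives at $h_e^{-1}\|[J_h\bm{v}]\|_{0,e}^2\le C\sum_{\tau\supset e}|\bm{v}|_{1,\tau}^2$; summing over the edges, with each triangle appearing a bounded number of times, yields both inequalities in \eqref{eq:Jh2}.

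The main obstacle is the local stability engine, i.e.\ establishing $\|\hat{J}\hat{\bm{v}}\|_{0,\hat\tau}\le C\|\hat{\bm{v}}\|_{1,\hat\tau}$ uniformly: one must verify that the normal boundary moments on the $\mathcal{F}_p$-edges are genuinely bounded functionals of $\hat{\bm{v}}\in H^1$ (through the trace theorem rather than a pointwise evaluation), and that the affine scaling constants remain uniform over the family $\mathcal{T}_h$ under the stated mesh regularity. Once this boundedness and the constant-reproduction property are in hand, the three remaining steps — scaling for $L^2$ stability, inverse-plus-Poincar\'e for the gradient, and trace-plus-approximation for the jumps — are routine.
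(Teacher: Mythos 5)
Your proposal is correct and follows essentially the same route as the paper: the jump bounds hinge on exactly the same observation that $\bm{v}\in H^1_0(\Omega)^2$ makes $[J_h\bm{v}]=[J_h\bm{v}-\bm{v}]$, followed by the trace inequality and the local approximation properties of $J_h$. The only difference is one of packaging --- the paper cites \cite{ChungWave2} for the $L^2$ stability and invokes the already-stated estimate \eqref{eq:uerrorZ} for the gradient bound, whereas you rebuild these from the reference-element scaling and constant-preservation arguments, which is the standard way those cited facts are themselves proved.
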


\begin{proof}

One can refer to \cite{ChungWave2} for the proof of the first inequality of \eqref{eq:Jh1} and we omit it for simplicity. Triangle inequality and \eqref{eq:uerrorZ} yield
\begin{align*}
\|\nabla (J_h\bm{v})\|_0\leq (\|\nabla(J_h\bm{v}-\bm{v})\|_0+\|\nabla \bm{v}\|_0)\leq C \|\bm{v}\|_1.
\end{align*}
Trace inequality, \eqref{eq:uerror} and \eqref{eq:uerrorZ} imply
\begin{align*}
h_e^{-1}\|[J_h\bm{v}]\|_{0,e}^2&=h_e^{-1}\|[\bm{v}-J_h\bm{v}]\|_{0,e}^2\leq C \sum_{\tau\in D(e)} (h_\tau^{-2}\|\bm{v}-J_h\bm{v}\|_{0,\tau}^2+\|\nabla (\bm{v}-J_h\bm{v})\|_{0,\tau}^2)\\
&\leq C \|\bm{v}\|_{1,D(e)}^2.
\end{align*}
Summing up over all the edges in $\mathcal{F}_u$ leads to the first inequality of \eqref{eq:Jh2}.

Proceeding analogously, we can also prove the second inequality of \eqref{eq:Jh2}, which is omitted for simplicity.
Therefore, the proof is complete by combining the preceding arguments.
\end{proof}

\begin{theorem}\label{thm:L2}
Assume that $(L,\bm{u},p)\in [H^{k+1}(\Omega)]^{2\times 2}\times H^{k+1}(\Omega)^{2}\times H^{k+1}(\Omega)$ and $(L_h,\bm{u}_h,p_h)\in W^h\times U^h\times P^h$ is the numerical solution of \eqref{eq:SDG-discrete}, then there exists a positive constant $C>0$ independent of the meshsize and the coefficients $\alpha,\epsilon$ such that
\begin{align*}
\|L-L_h\|_0+\|\alpha^{1/2}(\bm{u}-\bm{u}_h)\|_0&\leq Ch^{k+1}(\|L\|_{k+1}+\alpha_{\text{max}}^{1/2}\|\bm{u}\|_{k+1}),\\
\sqrt{\epsilon}(\|\bm{u}-\bm{u}_h\|_{Z_2}+\|\bm{u}-\bm{u}_h\|_{Z_1})&\leq Ch^k (\alpha_{\text{max}}^{1/2}\|\bm{u}\|_{k+1}+(1+\sqrt{\epsilon})\|L\|_{k+1}),\\
\|p-p_h\|_0&\leq Ch^{k+1}(\|p\|_{k+1}+(1+\sqrt{\epsilon})\|L\|_{k+1}+\alpha_{\text{max}}\|\bm{u}\|_{k+1}), k\geq 1.
\end{align*}

\end{theorem}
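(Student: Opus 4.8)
The plan is to prove Galerkin orthogonality, split each error into an interpolation part and a discrete part, and then run the standard three-step mixed argument: an energy identity for the discrete parts of $L$ and $\bm u$, an inf-sup estimate for the discrete part of $\bm u$ in the graph norms, and an inf-sup estimate for the discrete part of $p$. Since the bilinear forms in \eqref{eq:SDG-discrete-new} were produced by integration by parts from \eqref{eq:model1}--\eqref{eq:model3}, the exact solution $(L,\bm u,p)$ satisfies the same equations when tested against $\widehat W^h\times U^h\times P^h$; subtracting the discrete system yields the error equations
\begin{align*}
(L-L_h,G)&=\sqrt{\epsilon}B_h^*(\bm u-\bm u_h,G)\quad \forall G\in \widehat W^h,\\
\sqrt{\epsilon}B_h(L-L_h,\bm v)+(\alpha(\bm u-\bm u_h),\bm v)+b_h^*(p-p_h,\bm v)&=0\quad \forall \bm v\in U^h,\\
b_h(\bm u-\bm u_h,q)&=0\quad \forall q\in P^h.
\end{align*}
I set $\eta_L=L-\Pi_hL$, $\xi_L=\Pi_hL-L_h$ and define $\eta_u,\xi_u,\eta_p,\xi_p$ analogously through $J_h,I_h$. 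Besides \eqref{eq:BhPi}, \eqref{eq:bhJ} and \eqref{eq:bhI}, the decisive additional fact I would establish is $B_h(G,\eta_u)=B_h^*(\eta_u,G)=0$ for every $G\in\widehat W^h$: in $B_h^*(\eta_u,G)=-(\eta_u,\text{div}_hG)+\sum_{e\in\mathcal F_p}((\eta_u\cdot\bm n)\bm n,[G\bm n])_e$ the volume term vanishes because $\text{div}_hG|_\tau\in P^{k-1}(\tau)^2$ and $J_h$ matches all $P^{k-1}$ cell moments, while the $\mathcal F_p$ term vanishes because $\bm n\cdot[G\bm n]|_e\in P^k(e)$ and $J_h$ matches the $P^k$ normal moments on $\mathcal F_p$. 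This orthogonality is what decouples the velocity-gradient error from the velocity interpolation error.

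For the $L^2$ estimate I would test the first error equation with $G=\xi_L$ and the second with $\bm v=\xi_u$ and add. Using the adjoint relations \eqref{eq:adjoint}, the vanishing of $B_h^*(\eta_u,\xi_L)$, $B_h(\eta_L,\xi_u)$ and $b_h^*(\eta_p,\xi_u)$, together with $b_h(\xi_u,\xi_p)=0$ (which follows from the third error equation and \eqref{eq:bhJ}), all cross terms cancel and I obtain the identity $\|\xi_L\|_0^2+\|\alpha^{1/2}\xi_u\|_0^2=-(\eta_L,\xi_L)-(\alpha\eta_u,\xi_u)$. Cauchy--Schwarz, Young's inequality and absorption then give $\|\xi_L\|_0+\|\alpha^{1/2}\xi_u\|_0\le C(\|\eta_L\|_0+\|\alpha^{1/2}\eta_u\|_0)$; combining with \eqref{eq:Lerror}, \eqref{eq:uerror} and the triangle inequality produces the first line of the theorem, with the $\alpha_{\text{max}}^{1/2}$ coming from $\|\alpha^{1/2}\eta_u\|_0\le\alpha_{\text{max}}^{1/2}\|\eta_u\|_0$.

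For the $Z_1$ and $Z_2$ estimates I would invoke the inf-sup condition \eqref{eq:inf-supB} applied to $\xi_u$. From the first error equation and $B_h(G,\eta_u)=0$ I get $\sqrt{\epsilon}B_h(G,\xi_u)=(L-L_h,G)$ for all $G\in\widehat W^h$, whence $\sqrt{\epsilon}\|\xi_u\|_{Z_2}\le C\sup_{G}(L-L_h,G)/\|G\|_{X'}\le C\|L-L_h\|_0$ by \eqref{eq:scalingG}; since $\|\cdot\|_{Z_1}\le C\|\cdot\|_{Z_2}$ the same holds in the $Z_1$ norm. Adding the interpolation errors \eqref{eq:uerrorZ1}, \eqref{eq:uerrorZ} and rewriting $\sqrt{\epsilon}\,|\bm u|_{k+1}=|L|_k$ via $L=\sqrt{\epsilon}\nabla\bm u$ then yields the $(1+\sqrt{\epsilon})\|L\|_{k+1}$ structure of the second line.

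Finally, for the pressure I would use \eqref{eq:infsupbh} and the discrete Poincar\'e inequality to reduce to estimating $b_h(\bm v,\xi_p)$. From the second error equation together with \eqref{eq:BhPi} and \eqref{eq:bhI} one obtains $b_h(\bm v,\xi_p)=-\sqrt{\epsilon}B_h(\xi_L,\bm v)-(\alpha(\bm u-\bm u_h),\bm v)$. The reaction term is harmless: $|(\alpha(\bm u-\bm u_h),\bm v)|\le\alpha_{\text{max}}^{1/2}\|\alpha^{1/2}(\bm u-\bm u_h)\|_0\|\bm v\|_0$, which by the first line contributes the $\alpha_{\text{max}}\|\bm u\|_{k+1}$ term. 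The hard part, and the reason the pressure estimate requires $k\ge1$, is to bound $\sqrt{\epsilon}B_h(\xi_L,\bm v)$ by $Ch^{k+1}(\cdots)\|\bm v\|_0$ in an $\epsilon$-robust way. Rewriting $B_h(\xi_L,\bm v)=B_h^*(\bm v,\xi_L)$ controls it by $C\|\bm v\|_0\|\xi_L\|_{Z'}$, but a crude inverse estimate on $\|\xi_L\|_{Z'}$ only delivers order $h^k$; recovering the optimal order instead requires exploiting the discrete momentum equation to represent $\sqrt{\epsilon}\,\text{div}_hL_h$ cellwise as a local projection of $\bm f-\alpha\bm u_h-\nabla_hp_h$, and then invoking \eqref{eq:Lerror2} together with the velocity bound already established. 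I expect this last step to be the main obstacle, since it is precisely where the coupling between the broken pressure gradient and $\text{div}_h\xi_L$ must be closed without losing a power of $h$ or introducing a constant depending on $\epsilon$ or $\alpha$.
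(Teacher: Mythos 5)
Your treatment of the first two estimates is correct and is essentially the paper's argument: the same error equations, the same choice of test functions $\Pi_hL-L_h$, $J_h\bm{u}-\bm{u}_h$, $I_hp-p_h$, cancellation via the adjoint identities \eqref{eq:adjoint} together with \eqref{eq:BhPi}, \eqref{eq:bhJ}, \eqref{eq:bhI}, and then the inf-sup condition \eqref{eq:inf-supB} for the $Z_2$ bound. Your explicit verification that $B_h^*(\bm{u}-J_h\bm{u},G)=0$ for all $G\in\widehat{W}^h$ (volume term killed by the $P^{k-1}$ cell moments of $J_h$, the $\mathcal{F}_p$ term killed by the $P^k$ normal-trace moments) is a point the paper uses tacitly in the step $\sqrt{\epsilon}B_h(G,J_h\bm{u}-\bm{u}_h)=(L-L_h,G)$, so making it explicit is a genuine improvement in rigor.

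The pressure estimate, however, has a real gap, and you have correctly located it but not closed it. Starting from the discrete inf-sup \eqref{eq:infsupbh}, whose denominator only carries $\|\bm{v}\|_0$, forces you to control $\sqrt{\epsilon}B_h(\Pi_hL-L_h,\bm{v})$ by $C\|\bm{v}\|_0\,\|\Pi_hL-L_h\|_{Z'}$, and an inverse estimate on $\|\cdot\|_{Z'}$ loses a power of $h$. The repair you sketch --- representing $\sqrt{\epsilon}\,\mathrm{div}_h L_h$ cellwise through the discrete momentum balance --- is circular, since that balance reintroduces $\nabla_h p_h$, the very quantity being estimated. The paper avoids the issue entirely by invoking the \emph{continuous} inf-sup condition over $H^1_0(\Omega)^2$, i.e.\ \eqref{eq:perror}, so that the denominator is $\|\bm{v}\|_1$. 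One then writes $(\nabla\cdot\bm{v},I_hp-p_h)=-b_h(\bm{v},I_hp-p_h)=-b_h(J_h\bm{v},I_hp-p_h)$ using \eqref{eq:bhJ}, substitutes the second error equation with test function $J_h\bm{v}$, and bounds
\begin{align*}
\sqrt{\epsilon}B_h(\Pi_hL-L_h,J_h\bm{v})\leq C\sqrt{\epsilon}\,\|\Pi_hL-L_h\|_{X'}\|J_h\bm{v}\|_{Z_2}\leq C\sqrt{\epsilon}\,\|\Pi_hL-L_h\|_{0}\|\bm{v}\|_{1},
\end{align*}
where \eqref{eq:scalingG} and the stability bounds of Lemma~\ref{lem:estimate} are used; no norm of $\mathrm{div}_h(\Pi_hL-L_h)$ ever appears, and the right-hand side is already $O(h^{k+1})$ by the first estimate. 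The restriction $k\geq 1$ enters precisely here, to guarantee $I_hp-p_h\in L^2_0(\Omega)$ (the definition of $I_h$ preserves cell averages only when constants belong to $P^{k-1}(\tau)$). To complete your proof you should replace the discrete inf-sup step by this duality with $H^1_0$ test functions and the stable interpolant $J_h$.
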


\begin{proof}
Performing integration by parts on \eqref{eq:SDG-discrete-new}, we can obtain the following error equations
\begin{equation}
\begin{split}
(L-L_h,G)&=\sqrt{\epsilon}B_h^*(\bm{u}-\bm{u}_h,G)\quad \forall G\in \widehat{W}^h, \\ \sqrt{\epsilon}B_h(L-L_h,\bm{v})+(\alpha(\bm{u}-\bm{u}_h),\bm{v})+b_h^*(p-p_h,\bm{v})&=0\quad \forall \bm{v}\in U^h,\\
b_h(\bm{u}-\bm{u}_h,q)&=0\quad \forall q\in P^h.
\end{split}
\label{eq:error}
\end{equation}
Taking $G= \Pi_hL-L_h, \bm{v}=J_h\bm{u}-\bm{u}_h, q=I_hp-p_h$ in the above equations, we can obtain
\begin{align*}
(L-L_h,\Pi_hL-L_h)-\sqrt{\epsilon}B_h^*(\bm{u}-\bm{u}_h, \Pi_hL-L_h)=0,\\
\sqrt{\epsilon}B_h(L-L_h,J_h\bm{u}-\bm{u}_h)+(\alpha(\bm{u}-\bm{u}_h),J_h\bm{u}-\bm{u}_h)
+b_h^*(p-p_h,J_h\bm{u}-\bm{u}_h)&=0,\\
b_h(\bm{u}-\bm{u}_h,I_hp-p_h)&=0.
\end{align*}
Adding the above equations and exploiting the adjoint properties \eqref{eq:adjoint} yield
\begin{align*}
(L-L_h,\Pi_hL-L_h)+(\alpha(\bm{u}-\bm{u}_h),J_h\bm{u}-\bm{u}_h)=0.
\end{align*}
Therefore,
\begin{align*}
\|\Pi_h L-L_h\|_0^2+\|\alpha^{1/2}(J_h\bm{u}-\bm{u}_h)\|_0^2&=(\Pi_hL-L,\Pi_hL-L_h)
+(\alpha(J_h\bm{u}-\bm{u}),J_h\bm{u}-\bm{u}_h)\\
&\leq \|\Pi_hL-L\|_0\|\Pi_hL-L_h\|_0\\
&\;+\|\alpha^{1/2}(J_h\bm{u}-\bm{u})\|_0\|\alpha^{1/2}(J_h\bm{u}-\bm{u}_h)\|_0.
\end{align*}
Young's inequality implies
\begin{align*}
\|\Pi_h L-L_h\|_0+\|\alpha(J_h\bm{u}-\bm{u}_h)\|_0\leq C \Big(\|\Pi_hL-L\|_0+\alpha_{\text{max}}^{1/2}\|J_h\bm{u}-\bm{u}\|_0\Big).
\end{align*}
On the other hand, we have from inf-sup condition \eqref{eq:inf-supB} and \eqref{eq:error}
\begin{align*}
\sqrt{\epsilon}\|J_h\bm{u}-\bm{u}_h\|_{Z_2}\leq C \sup_{G\in \widehat{W}^h}\frac{\sqrt{\epsilon}B_h(G,J_h\bm{u}-\bm{u}_h)}{\|G\|_0}=C \sup_{G\in \widehat{W}^h}\frac{(L-L_h,G)}{\|G\|_0}\leq C \|L-L_h\|_0.
\end{align*}
The estimation of $\|J_h\bm{u}-\bm{u}_h\|_{Z_1}$ follows immediately.
%
%
%
%
%

It remains to estimate $\|I_hp-p_h\|_0$. Since $I_hp-p_h\in L^2_0(\Omega)$ when $k\geq 1$, the following inf-sup condition holds (cf. \cite{Girault86})
\begin{align}
\|I_hp-p_h\|_0\leq C \sup_{\bm{v}\in H^1_0(\Omega)^2\backslash \{0\}}\frac{(\nabla \cdot \bm{v},I_hp-p_h)}{\|\bm{v}\|_1}.\label{eq:perror}
\end{align}

An application of the discrete adjoint property, \eqref{eq:scalingG}, integration by parts, \draft{\eqref{eq:bhJ}}, Lemma~\ref{lem:estimate} and the second equation of \eqref{eq:error} implies
\begin{align*}
(\nabla \cdot \bm{v},I_hp-p_h)&=-(\bm{v},\nabla_h (I_hp-p_h))+\sum_{e\in \mathcal{F}_p}(\bm{v}\cdot \bm{n},[I_hp-p_h])_e=-b_h(\bm{v},I_hp-p_h)\\
&=-b_h(J_h\bm{v},I_hp-p_h)=\sqrt{\epsilon}B_h(\Pi_hL-L_h,J_h\bm{v})
+(\alpha(\bm{u}-\bm{u}_h),J_h\bm{v})\\
&\leq C \Big(\sqrt{\epsilon}\|\Pi_h L-L_h\|_{0}\|J_h\bm{v}\|_{Z_2}
+\alpha_{\text{max}}\|\bm{u}-\bm{u}_h\|_0\|J_h\bm{v}\|_0\Big)\\
&\leq C \Big(\sqrt{\epsilon}\|\Pi_h L-L_h\|_{0}
+\alpha_{\text{max}}\|\bm{u}-\bm{u}_h\|_0\Big)\|\bm{v}\|_1,
\end{align*}
which can be combined with \eqref{eq:perror} leads to
\begin{align*}
\|I_hp-p_h\|_0\leq C (\sqrt{\epsilon}\|\Pi_hL-L_h\|_{0}+\alpha_{\text{max}}\|\bm{u}-\bm{u}_h\|_0).
\end{align*}
The proof is complete by combing the above estimates and the approximation properties \eqref{eq:Lerror}-\eqref{eq:uerrorZ}.


\end{proof}

\begin{theorem}(superconvergence).
Assume that $(L,\bm{u},p)\in [H^{k+1}(\Omega)]^{2\times 2}\times H^{k+1}(\Omega)^{2}\times H^{k+1}(\Omega),k\geq 1$ and $(L_h,\bm{u}_h,p_h)\in W^h\times U^h\times P^h$ is the numerical solution of \eqref{eq:SDG-discrete}, then there exists a positive constant $C>0$ independent of the coefficients $\epsilon,\alpha$ such that
\begin{align*}
\|J_h\bm{u}-\bm{u}_h\|_0\leq Ch^{k+2}\Big((1+\sqrt{\epsilon})\|L\|_{k+1}+\alpha_{\text{max}}\|\bm{u}\|_{k+1}+\|p\|_{k+1}\Big).
\end{align*}

\end{theorem}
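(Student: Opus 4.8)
The plan is to run an Aubin--Nitsche duality argument for $\bm e:=J_h\bm u-\bm u_h$. First I would introduce the adjoint Brinkman problem with source $\bm e$, written in first order form: find $(\bm\Phi,\bm\psi,\xi)$ with $\bm\Phi=\sqrt{\epsilon}\nabla\bm\psi$, $-\sqrt{\epsilon}\,\mathrm{div}\,\bm\Phi+\alpha\bm\psi+\nabla\xi=\bm e$, $\mathrm{div}\,\bm\psi=0$, $\bm\psi|_{\partial\Omega}=\bm{0}$, $\int_\Omega\xi=0$, and I would assume $H^2\times H^1$ elliptic regularity with a constant independent of $\epsilon,\alpha$, i.e. $\|\bm\psi\|_2+\|\xi\|_1\le C\|\bm e\|_0$ (hence $\|\bm\Phi\|_1\le C\sqrt{\epsilon}\|\bm e\|_0$). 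Because the smooth dual solution has no interior jumps ($[\bm\Phi\bm n]=0$, $[\xi]=0$) and vanishes on $\partial\Omega$, the very integration by parts that produced \eqref{eq:SDG-discrete} yields a consistency identity for the exact dual solution; testing its momentum part against $\bm v=\bm e\in U^h$ gives $\|\bm e\|_0^2=\sqrt{\epsilon}B_h(\bm\Phi,\bm e)+(\alpha\bm\psi,\bm e)+b_h^*(\xi,\bm e)$.

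Next I would dispose of the pressure term and reorganize the other two. For the pressure term, \eqref{eq:bhI} gives $b_h^*(\xi,\bm e)=b_h^*(I_h\xi,\bm e)=b_h(\bm e,I_h\xi)$ via the adjoint property \eqref{eq:adjoint}; and combining the third equation of \eqref{eq:error} with \eqref{eq:bhJ} shows $b_h(\bm e,q)=0$ for every $q\in P^h$, so this term vanishes outright. For the diffusive term I would use the commuting property \eqref{eq:BhPi} to replace $\bm\Phi$ by $\Pi_h\bm\Phi$, pass to $B_h^*$ by \eqref{eq:adjoint}, split $\bm e=(J_h\bm u-\bm u)+(\bm u-\bm u_h)$, and invoke the first equation of \eqref{eq:error} to rewrite $\sqrt{\epsilon}B_h^*(\bm u-\bm u_h,\Pi_h\bm\Phi)$ as $(L-L_h,\Pi_h\bm\Phi)$, which I would process further with the first dual equation $(\bm\Phi,G)=\sqrt{\epsilon}B_h^*(\bm\psi,G)$ on $\widehat W^h$. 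The mass term $(\alpha\bm\psi,\bm e)$ I would develop by splitting $\bm e=(J_h\bm u-\bm u)+(\bm u-\bm u_h)$ and testing the momentum equation of \eqref{eq:error} against $J_h\bm\psi$, so that the $(L-L_h)$ and pressure residuals recombine against dual interpolation errors rather than against $\bm\psi$ itself, while leaving the clean piece $(\alpha(J_h\bm u-\bm u),\bm\psi)$.

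The outcome of this bookkeeping is that $\|\bm e\|_0^2$ becomes a sum of inner products, each pairing a \emph{primal} residual (controlled through Theorem~\ref{thm:L2} and \eqref{eq:Lerror}--\eqref{eq:uerrorZ}) against a \emph{dual interpolation} error. I would then estimate these using the projection bounds, the cell-average estimate \eqref{eq:pih} (which supplies the decisive extra factor $h$ in $(\alpha(J_h\bm u-\bm u),\bm\psi-\pi_h\bm\psi)$, exploiting the $P^{k-1}$-orthogonality of $J_h$), and the dual regularity $\|\bm\psi\|_2+\|\xi\|_1\le C\|\bm e\|_0$. Products of type $O(h^{k+1})\cdot O(h)$ then deliver the asserted $O(h^{k+2})$, the $\sqrt{\epsilon}$ prefactors combining with $\|\bm\Phi\|_1=\sqrt{\epsilon}\|\bm\psi\|_2$ to produce the $(1+\sqrt{\epsilon})\|L\|_{k+1}$ dependence; dividing through by $\|\bm e\|_0$ completes the estimate.

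The main obstacle, as always for superconvergence, is gaining the extra power of $h$ with a constant uniform in $\epsilon$ and $\alpha$. Concretely this demands that every generic $O(h^{k+1})\cdot O(1)$ cross term either cancels by the Galerkin orthogonality in \eqref{eq:error} together with the commuting identities \eqref{eq:bhJ}, \eqref{eq:bhI}, \eqref{eq:BhPi}, or is recast as an $O(h^{k+1})\cdot O(h)$ product by using the low-order orthogonality of $J_h$, $I_h$ and $\pi_h$ --- which is precisely where the hypothesis $k\ge 1$ is needed. The two most delicate points are verifying that $(\alpha(\bm u-\bm u_h),\bm\psi-J_h\bm\psi)$ is genuinely of order $h^{k+2}$ rather than merely $h^{k+1}$ (which forces one to carry the $\alpha^{1/2}$-weighted bound of Theorem~\ref{thm:L2} against a first-order dual approximation), and tracking the $\sqrt{\epsilon}$ factors carried by $\bm\Phi=\sqrt{\epsilon}\nabla\bm\psi$ so that the diffusive contribution enters only through $(1+\sqrt{\epsilon})\|L\|_{k+1}$; both rest on the $\epsilon$- and $\alpha$-uniform regularity of the adjoint problem, which must be assumed or established separately.
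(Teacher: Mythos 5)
Your proposal is correct and follows essentially the same route as the paper: a duality argument with the adjoint Brinkman problem in first-order form, $\epsilon$- and $\alpha$-uniform $H^2\times H^1$ regularity, and reduction via the error equations \eqref{eq:error} together with the commuting projections $\Pi_h$, $J_h$, $I_h$, $\pi_h$ to products of primal residuals against dual interpolation errors --- precisely the terms $I_1$--$I_5$ of the paper's proof, including the decisive use of \eqref{eq:pih} on $(\alpha(J_h\bm{u}-\bm{u}),\bm{\phi}-\pi_h\bm{\phi})$. The only differences are cosmetic (sign convention $\sigma=-\sqrt{\epsilon}\nabla\bm{\phi}$, and the paper adds the vanishing first and third dual equations before integrating by parts rather than substituting afterwards as you do).
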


\begin{proof}

Consider the auxiliary problem
\begin{align*}
-\epsilon \Delta \bm{\phi}+\alpha \bm{\phi}+\nabla \chi =J_h \bm{u}-\bm{u}_h,
\end{align*}
which satisfies the following elliptic regularity estimate (cf. \cite{Girault86,FuQiu19})
\begin{align}
\|\bm{\phi}\|_2+\|\chi\|_1\leq C \|J_h \bm{u}-\bm{u}_h\|_0.\label{eq:regularity}
\end{align}
Let $\sigma=-\sqrt{\epsilon} \nabla \bm{\phi}$, the above equation can be recast into the following first order system
\begin{align*}
\sigma&=-\sqrt{\epsilon}\nabla \bm{\phi}\quad\;\; \mbox{in}\;\Omega,\\
\sqrt{\epsilon}\, \text{div}\, \sigma+\alpha \bm{\phi}+\nabla \chi& =J_h\bm{u}-\bm{u}_h\quad \mbox{in}\;\Omega,\\
\nabla \cdot \bm{\phi}&=0\hspace{1.7cm} \mbox{in}\;\Omega,\\
\bm{\phi}&=\bm{0}\hspace{1.7cm}\mbox{on }\partial \Omega.
\end{align*}
Multiplying the first equation by $L-L_h$, the second equation by $J_h\bm{u}-\bm{u}_h$ and the third equation by $p-p_h$, then we can get from integration by parts
\begin{equation*}
\begin{split}
\|J_h\bm{u}-\bm{u}_h\|_0^2&=\sqrt{\epsilon}(\text{div } \sigma, J_h\bm{u}-\bm{u}_h)+ (\alpha\bm{\phi},J_h\bm{u}-\bm{u}_h)+(\nabla \chi, J_h\bm{u}-\bm{u}_h)+(\sigma, L-L_h)\\
&\;+\sqrt{\epsilon} (\nabla \bm{\phi}, L-L_h)-(\text{div}\,\bm{\phi}, p-p_h)\\
&=-\sqrt{\epsilon}B_h(\sigma,J_h\bm{u}-\bm{u}_h)+(\alpha\bm{\phi},J_h\bm{u}-\bm{u}_h)+b_h^*(\chi, J_h\bm{u}-\bm{u}_h)+(\sigma,L-L_h)\\
&\;+\sqrt{\epsilon}B_h^*(\bm{\phi},L-L_h)+b_h(\bm{\phi},p-p_h),
\end{split}
\end{equation*}
where we use the fact that $L-L_h\in \widehat{W}^h$ and $(\bm{\phi}\cdot \bm{t})\bm{t}\cdot \bm{n}\mid_e=0$ for $e\in \mathcal{F}_p$, so $\sum_{e\in \mathcal{F}_p}([(L-L_h)\bm{n}],(\bm{\phi}\cdot \bm{t})\bm{t})_e=0$.

It then follows from \eqref{eq:error}
\begin{align*}
\|J_h\bm{u}-\bm{u}_h\|_0^2&=-\sqrt{\epsilon}B_h^*(J_h\bm{u}-\bm{u}_h,\sigma-\Pi_h\sigma)
+(L-L_h,\sigma-\Pi_h\sigma)+(\alpha(\bm{u}-\bm{u}_h),\bm{\phi}-J_h\bm{\phi})\\
&\;+(\alpha(J_h\bm{u}-\bm{u}),\bm{\phi}-\pi_h\bm{\phi})
+b_h^*(p-p_h,\bm{\phi}-J_h\bm{\phi})+\sqrt{\epsilon}B_h(L-L_h,\bm{\phi}-J_h\bm{\phi})\\
&\;+b_h(J_h\bm{u}-\bm{u}_h,\chi-I_h\chi)\\
&=(L-L_h,\sigma-\Pi_h\sigma)+(\alpha(\bm{u}-\bm{u}_h),\bm{\phi}-J_h\bm{\phi})
+(\alpha(J_h\bm{u}-\bm{u}),\bm{\phi}-\pi_h\bm{\phi})\\
&\;+b_h^*(p-I_hp,\bm{\phi}-J_h\bm{\phi})
+\sqrt{\epsilon}B_h^*(\bm{\phi}-J_h\bm{\phi},L-\Pi_hL)\\
&=:I_1+I_2+I_3+I_4+I_5.
\end{align*}
We can bound $I_1$ to $I_5$ by \eqref{eq:Lerror}, \eqref{eq:Lerror2}, \eqref{eq:Perror}, \eqref{eq:uerrorZ1} and \eqref{eq:pih} as:
\begin{align*}
I_1&\leq  \|L-L_h\|_0\|\sigma-\Pi_h\sigma\|_0\leq C h\|L-L_h\|_0\|\sigma\|_1,\\
I_2&\leq \alpha_{\text{max}}^{1/2}\|\alpha^{1/2}(\bm{u}-\bm{u}_h)\|_0
\|\bm{\phi}-J_h\bm{\phi}\|_0\leq C \alpha_{\text{max}}^{1/2} h\|\alpha^{1/2}(\bm{u}-\bm{u}_h)\|_0\|\bm{\phi}\|_2,\\
I_3&\leq \alpha_{\text{max}}\|\bm{\phi}-\pi_h\bm{\phi}\|_0\|\draft{J_h\bm{u}-\bm{u}}\|_0\leq C \alpha_{\text{max}} h\|\bm{\phi}\|_1\|J_h\bm{u}-\bm{u}\|_0,\\
I_4&\leq \|\bm{\phi}-J_h\bm{\phi}\|_{Z_1}\normmm{p-I_hp}_{0,h}\leq C h\|\bm{\phi}\|_2\normmm{p-I_hp}_{0,h},\\
I_5&\leq \sqrt{\epsilon}\|L-\Pi_h L\|_{Z'}\|\bm{\phi}-J_h\bm{\phi}\|_{X_1}\leq C \sqrt{\epsilon}h^2\|L-\Pi_h L\|_{Z'}\|\bm{\phi}\|_2.
\end{align*}
Therefore, we have from \eqref{eq:regularity}
\begin{align*}
\|J_h\bm{u}-\bm{u}_h\|_0^2\leq C h^{k+2}\Big((1+\sqrt{\epsilon})\|L\|_{k+1}+\alpha_{\text{max}}\|\bm{u}\|_{k+1}
+\|p\|_{k+1}\Big)\|J_h\bm{u}-\bm{u}_h\|_0,
\end{align*}
which gives the desired estimate.

\end{proof}


\section{Numerical experiments}\label{sec:numerical}

In this section, we present some numerical results to test the performances of the proposed method, where the convergence history for different values of $\epsilon$ are reported. In addition, to confirm that our method can be flexibly applied to fairly general meshes, the highly distorted grids are exploited to display the convergence history.
\begin{example}{\rm
In this example, we consider the exact solution given by
\begin{align*}
\bm{u}&=(\sin(2\pi x)\sin(2\pi y), \sin(2\pi x)\sin(2\pi y))^T,\\
p &= \sin(x)\cos(y)+\sin(1)(\cos(1)-1),
\end{align*}
where we take $\alpha=1$.
}
\end{example}

In all the tables presented below, we use $N_{\text{global}}$ to denote the global degrees of freedom of our final system generated from the discrete problem and D.O.F. stands for the degrees of freedom. We first test our method on the square grids (cf. Figure~\ref{ex1-grid}) and the convergence history for various values of $\epsilon$ with polynomial order $k=1,\cdots,3$ is shown in Table~\ref{table1}-Table~\ref{table4}. It can be observed that the optimal convergence rates of order $k+1$ can be obtained for all the variables $\bm{u}_h,p_h$ with various values of $\epsilon$, which is consistent with our theoretical results in Theorem~\ref{thm:L2}. In addition, we can also observe that the accuracy of $\|\bm{u}-\bm{u}_h\|_0$ remains almost the same whether the Brinkman problem is Darcy-dominating or Stokes-dominating.
We remark that a suboptimal convergence rates can be obtained for $L_h$ when $\epsilon\rightarrow 0$, in which the Brinkman problem collapses to Darcy equation and the explanations of this phenomena can be found in \cite{FuQiu19}.

\begin{figure}[H]
\centering
\scalebox{0.3}{
\includegraphics[width=20cm]{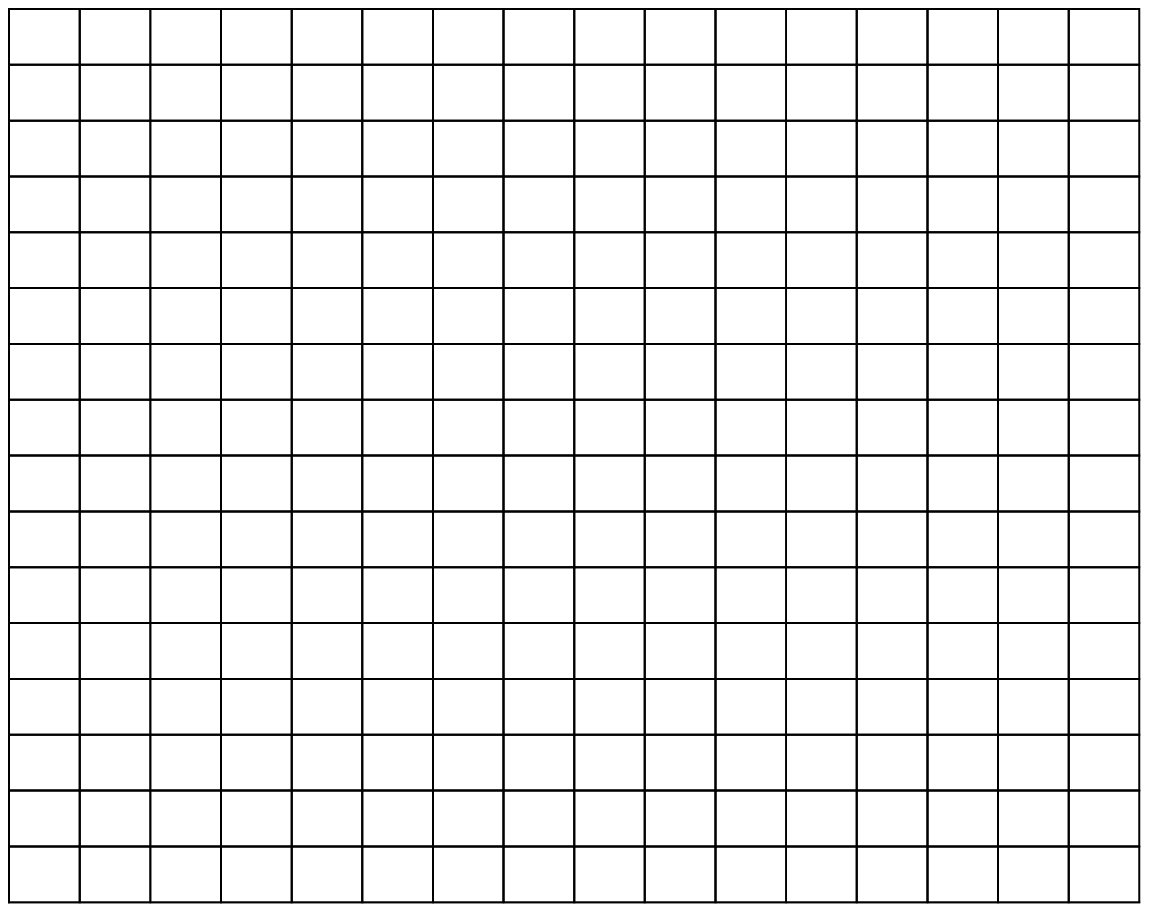}
}
\scalebox{0.3}{
\includegraphics[width=20cm]{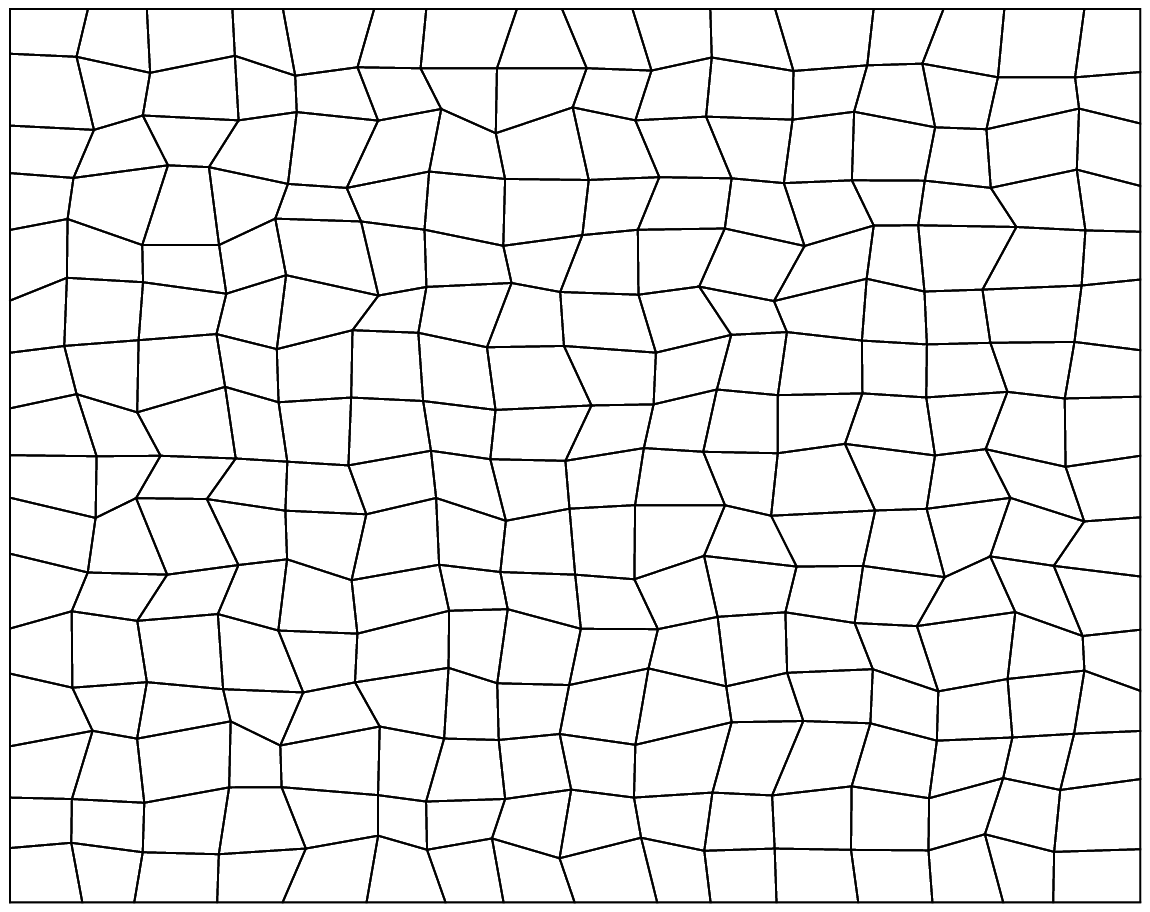}
}
\caption{Schematic of the mesh employed in the simulation: square grids (left) and distorted grids (right).}
\label{ex1-grid}
\end{figure}

\begin{table}[H]
\begin{center}
{\footnotesize
\begin{tabular}{ccc||c c|c c|c c}
\hline
 &Mesh &D.O.F.& \multicolumn{2}{|c|}{$\|\bs{u}-\bs{u}_h\|_{0}$} & \multicolumn{2}{|c|}{$\|L-L_h\|_{0}$} & \multicolumn{2}{|c}{$\|p-p_h\|_{0}$}\\
\hline
$k$ & $h^{-1}$ &$N_{\text{global}}$ & Error & Order & Error& Order& Error & Order \\
\hline
1& 2 &377  & 4.58e-01 &   N/A   &6.36e+00 &  N/A &1.85e+00 &   N/A \\
&  4 &1457 & 1.73e-01 &   1.40  &1.43e+00 & 2.15 &2.97e-01 & 2.63  \\
&  8 &5729  & 4.41e-02 &   1.97  &3.79e-01 & 1.92 &6.98e-02 & 2.09 \\
&  16 &22721  & 1.11e-02 &   1.99  &9.64e-02 & 1.97 &1.62e-02 & 2.10 \\
&  32&90497  & 2.80e-03 &   1.99  &2.42e-02 & 1.99 &3.90e-03 & 2.06  \\
\hline
2& 2 &733 & 1.31e-01 &   N/A     &3.11e-01  &N/A   &1.39e-01 &  N/A  \\
 & 4 &2857 & 1.46e-02  &  3.16    &1.52e-01  &1.03  &3.81e-02 & 1.87\\
 & 8 &11281 & 1.84e-03  &  2.99    &1.97e-02  &2.94  &4.62e-03 & 3.04\\
 & 16 &44833 & 2.30e-04  &  2.99    &2.49e-03  &2.98  &5.76e-04 &3.00\\
 & 32&118753 & 2.88e-05  &  3.00    &3.13e-04  &2.99  &7.20e-05  & 3.00\\
\hline
3& 2 &1201   &1.06e-02   & N/A      &2.84e-01 &N/A   &9.15e-02&N/A \\
&  4 &4705   &1.37e-03   &2.95      &1.42e-02 &4.32  &3.88e-03&4.55\\
&  8 &18625  &8.64e-05   &3.98      &9.09e-04 &3.96  &2.42e-04&4.00\\
&  16 &74113  &5.41e-06   &3.99      &5.73e-05 &3.99  &1.48e-05&4.03\\
& 32&295681 &3.38e-07   &3.99      &3.59e-06 &3.99  &9.12e-07&4.01\\
\hline
\end{tabular}}
\caption{Convergence history for $\epsilon = 10^{0}$ on square grids.}
\label{table1}
\end{center}
\end{table}

\begin{table}[H]
\begin{center}
{\footnotesize
\begin{tabular}{ccc||c c|c c|c c}
\hline
 &Mesh &D.O.F.& \multicolumn{2}{|c|}{$\|\bs{u}-\bs{u}_h\|_{0}$} & \multicolumn{2}{|c|}{$\|L-L_h\|_{0}$} & \multicolumn{2}{|c}{$\|p-p_h\|_{0}$}\\
\hline
$k$ & $h^{-1}$&$N_{\text{global}}$ & Error & Order & Error & Order &  Error & Order \\
\hline
1& 2 &377 & 2.96e-01 &   N/A    &5.89e-01 &  N/A &1.91e-02 &   N/A \\
&  4 &1457 & 1.71e-01 &   0.79   &1.37e-01 & 2.10 &4.22e-03 & 2.18  \\
&  8 &5729 & 4.39e-02 &   1.96   &3.73e-02 & 1.88 &9.50e-04 & 2.15 \\
&  16 &22721 & 1.11e-02 &   1.98   &9.59e-03 & 1.96 &2.20e-04 & 2.10 \\
&  32&90497 & 2.78e-03 &   1.99   &2.42e-03 & 1.99 &5.33e-05 & 2.05  \\
\hline
2& 2 &733  & 1.30e-01 &   N/A     &3.47e-02  &N/A   &2.85e-03 &  N/A  \\
 & 4 &2857  & 1.44e-02  &  3.17    &1.46e-02  &1.24  &3.96e-04 & 2.85\\
 & 8 &11281  & 1.83e-03  &  2.97    &1.96e-03  &2.91  &4.67e-05 & 3.08\\
 & 16 &44833  & 2.30e-04  &  2.99    &2.49e-04  &2.97  &5.80e-06 & 3.00\\
 & 32&118753 & 2.88e-05  &  2.99    &3.13e-05  &2.99  &7.25e-07 & 3.00\\
\hline
3& 2&1201  &9.43e-03   & N/A      &2.67e-02 &N/A   &8.98e-04&N/A \\
&  4&4705  &1.36e-03   &2.79      &1.39e-03 &4.26  &4.07e-05&4.46\\
&  8&18625  &8.63e-05   &3.97      &9.09e-05 &3.94  &2.45e-06&4.05\\
&  16&74113  &5.41e-06   &3.99      &5.72e-06 &3.98  &1.48e-07&4.04\\
& 32&295681  &3.38e-07   &3.99      &3.59e-07 &3.99  &9.13e-09&4.02\\
\hline
\end{tabular}}
\caption{Convergence history for $\epsilon = 10^{-2}$ on square grids.}
\end{center}
\end{table}

\begin{table}[H]
\begin{center}
{\footnotesize
\begin{tabular}{ccc||c c|c c|c c}
\hline
 &Mesh &D.O.F.& \multicolumn{2}{|c|}{$\|\bs{u}-\bs{u}_h\|_{0}$} & \multicolumn{2}{|c|}{$\|L-L_h\|_{0}$} & \multicolumn{2}{|c}{$\|p-p_h\|_{0}$}\\
\hline
$k$ & $h^{-1}$ &$N_{\text{global}}$ & Error & Order & Error & Order &  Error & Order \\
\hline
1& 2 &377  & 1.72e-01 &   N/A    &5.63e-02 &  N/A &1.23e-02 &   N/A \\
&  4 &1457  & 1.73e-01 &   0.008  &1.30e-02 & 2.11 &2.88e-03 & 2.09  \\
&  8 &5729  & 4.41e-02 &   1.97   &3.69e-03 & 1.81 &6.24e-04 & 2.20 \\
&  16 &22721  & 1.11e-02 &   1.98   &1.02e-03 & 1.85 &1.48e-04 & 2.07 \\
&  32&90497  & 2.78e-03 &   1.99   &2.59e-04 & 1.98 &3.65e-05 & 2.02  \\
\hline
2& 2 &733 & 1.29e-01 &   N/A     &5.19e-03  &N/A   &1.95e-03 &  N/A  \\
 & 4 &2857 & 1.37e-02  &  3.24    &1.43e-03  &1.86  &1.36e-04 & 3.84\\
 & 8 &11281 & 1.78e-03  &  2.95    &2.33e-04  &2.62  &9.87e-06 & 3.78\\
 & 16 &44833 & 2.26e-04  &  2.98    &3.19e-05  &2.87  &8.42e-07 & 3.55\\
 & 32&118753 & 2.85e-05  &  2.98    &3.62e-06  &3.13  &8.79e-08 & 3.26\\
\hline
3& 2 &1201 &5.86e-03   & N/A      &2.15e-03 &N/A   &1.03e-04&N/A \\
&  4 &4705 &1.33e-03   &2.14      &1.56e-04 &3.78  &9.15e-06&3.50\\
&  8 &18625 &8.46e-05   &3.97      &1.20e-05 &3.70  &3.04e-07&4.91\\
&  16 &74113 &5.34e-06   &3.98      &7.42e-07 &4.01  &1.01e-08&4.91\\
& 32&295681 &3.37e-07   &3.99      &4.06e-08 &4.19  &3.46e-010&4.87\\
\hline
\end{tabular}}
\caption{Convergence history for $\epsilon = 10^{-4}$ on square grids.}
\end{center}
\end{table}

\begin{table}[H]
\begin{center}
{\footnotesize
\begin{tabular}{ccc||c c|c c|c c}
\hline
 &Mesh &D.O.F.& \multicolumn{2}{|c|}{$\|\bs{u}-\bs{u}_h\|_{0}$} & \multicolumn{2}{|c|}{$\|L-L_h\|_{0}$} & \multicolumn{2}{|c}{$\|p-p_h\|_{0}$}\\
\hline
$k$ & $h^{-1}$ &$N_{\text{global}}$ & Error & Order & Error & Order &  Error & Order \\
\hline
1& 2 &377  & 1.69e-01 &   N/A    &5.60e-04 &  N/A &1.23e-02 &   N/A \\
&  4 &1457  & 1.73e-01 &   0.03   &1.30e-04 & 2.10 &2.87e-03 & 2.09  \\
&  8 &5729  & 4.42e-02 &   1.97   &3.83e-05 & 1.77 &6.22e-04 & 2.20 \\
&  16 &22721  & 1.11e-02 &   1.98   &1.26e-05 & 1.60 &1.48e-04 & 2.07 \\
&  32&90497  & 2.79e-03 &   1.99   &5.11e-06 & 1.30 &3.64e-05 & 2.02  \\
\hline
2& 2 &733 & 1.30e-01 &   N/A     &5.32e-05  &N/A   &1.94e-03 &  N/A  \\
 & 4 &2857 & 1.37e-02  &  3.24    &1.48e-05  &1.84  &1.35e-04 & 3.84\\
 & 8 &11281 & 1.78e-03  &  2.94    &3.02e-06  &2.30  &9.76e-06 & 3.79\\
 & 16 &44833 & 2.24e-04  &  2.98    &7.18e-07  &2.07  &8.30e-07 & 3.55\\
 & 32&178753 & 2.81e-05  &  2.99    &1.78e-07  &2.01  &8.71e-08 & 3.25\\
\hline
3& 2 &1201 &5.81e-03   & N/A      &2.14e-05 &N/A   &1.00e-04&N/A \\
&  4 &4705 &1.34e-03   &2.11      &1.78e-06 &3.58  &9.02e-06&3.48\\
&  8 &18625 &8.57e-05   &3.97      &1.88e-07 &3.25  &2.94e-07&4.94\\
&  16 &74113 &5.38e-06   &3.99      &2.17e-08 &3.11  &9.41e-09&4.96\\
& 32&295681 &3.37e-07   &3.99      &2.63e-09 &3.04  &3.10e-010&4.92\\
\hline
\end{tabular}}
\caption{Convergence history for $\epsilon = 10^{-8}$ on square grids.}
\label{table4}
\end{center}
\end{table}

Next we test the performances of our method on highly distorted grids as shown in Figure~\ref{ex1-grid} and the results are provided in Table~\ref{table5}-Table~\ref{table8}. As expected, optimal convergence rates of order $k+1$ can be observed for $\bm{u}_h$ and $p_h$ with various values of $\epsilon$. In addition, a suboptimal convergence rates can be achieved for $L_h$ when the Brinkman problem becomes Darcy-dominating. The results presented here indicate that our method is robust to the rough grids.

\begin{table}[H]
\begin{center}
{\footnotesize
\begin{tabular}{ccc||c c|c c|c c}
\hline
 &Mesh &D.O.F.& \multicolumn{2}{|c|}{$\|\bs{u}-\bs{u}_h\|_{0}$} & \multicolumn{2}{|c|}{$\|L-L_h\|_{0}$} & \multicolumn{2}{|c}{$\|p-p_h\|_{0}$}\\
\hline
$k$ & $h^{-1}$ &$N_{\text{global}}$ & Error & Order & Error & Order &  Error & Order \\
\hline
1& 2 &377   & 5.48e-01 &   N/A    &6.37e+00 &  N/A &2.05e+00 &   N/A \\
&  4 &1457  & 1.97e-01 &   1.47   &1.71e+00 & 1.89 &3.89e-01 & 2.39  \\
&  8 &5729  & 5.40e-02 &   1.87   &4.23e-01 & 2.02 &8.44e-02 & 2.20 \\
&  16 &22721  & 1.32e-02 &  2.02   &1.14e-01 & 1.89 &2.15e-02 & 1.97 \\
&  32&90497  & 3.37e-03 &  1.97   &2.89e-02 & 1.98 &5.27e-03 & 2.03  \\
\hline
2& 2 &733   & 1.79e-01 &   N/A    &6.06e-01  &N/A   &1.85e-01 &  N/A  \\
 & 4 &2857  &2.05e-02  &  3.12    &2.63e-01  &1.55  &5.38e-02 & 1.78\\
 & 8 &11281 &2.79e-03  &  2.87    &2.83e-02  &2.86  &7.11e-03 &2.92\\
 & 16 &44833 &3.37e-04  &  3.05    &3.57e-03  &2.98  &8.19e-04 &3.11\\
 & 32&178753&4.47e-05  &  2.91    &4.56e-04  &2.96  &1.07e-04 &2.93\\
\hline
3& 2 &1201 &1.39e-02   & N/A      &3.24e-01 &N/A   &1.00e-01&N/A \\
&  4 &4705 &2.42e-03   &2.53      &2.24e-02 &3.85  &6.07e-03&4.04\\
&  8 &18625 &1.62e-04  &3.90      &1.55e-03 &3.85  &3.97e-04&3.93\\
&  16 &74113 &1.03e-05  &3.97      &1.04e-04 &3.90  &2.60e-05&3.93\\
& 32&295681 &6.99e-07 &3.88      &6.69e-06 &3.96  &1.66e-06&3.97\\
\hline
\end{tabular}}
\caption{Convergence history for $\epsilon = 1$ on distorted grids.}
\label{table5}
\end{center}
\end{table}

\begin{table}[H]
\begin{center}
{\footnotesize
\begin{tabular}{ccc||c c|c c|c c}
\hline
 &Mesh &D.O.F.& \multicolumn{2}{|c|}{$\|\bs{u}-\bs{u}_h\|_{0}$} & \multicolumn{2}{|c|}{$\|L-L_h\|_{0}$} & \multicolumn{2}{|c}{$\|p-p_h\|_{0}$}\\
\hline
$k$ & $h^{-1}$ &$N_{\text{global}}$ & Error & Order & Error & Order &  Error & Order \\
\hline
1& 2 &377   & 3.95e-01 &   N/A    &5.86e-01 &  N/A &2.04e-02 &   N/A \\
&  4 &1457  & 1.99e-01 &   1.01   &1.57e-01 & 1.95 &5.38e-03 & 1.97  \\
&  8 &5729  & 5.16e-02 &   1.97   &4.54e-02 & 1.81 &1.18e-03 & 2.21 \\
&  16 &22721  & 1.33e-02 &  1.97   &1.17e-02 & 1.97 &2.85e-04 & 2.06 \\
&  32&90497  & 3.39e-03 &  1.97   &2.93e-03 & 1.99 &7.03e-05 & 2.02  \\
\hline
2& 2 &733   & 1.62e-01 &   N/A    &5.68e-02  &N/A   &4.23e-03 &  N/A  \\
 & 4 &2857  &1.75e-02  &  3.27    &1.84e-02  &1.66  &5.38e-04 & 3.02\\
 & 8 &11281 &2.73e-03  &  2.70    &2.63e-03  &2.83  &6.97e-05 &2.98\\
 & 16 &44833 &3.57e-04  &  2.94    &3.70e-04  &2.84  &8.95e-06 &2.97\\
 & 32&178753&4.46e-05  &  3.00    &4.50e-05  &3.04  &1.07e-06 &3.07\\
\hline
3& 2 &1201 &1.34e-02   & N/A      &3.07e-02 &N/A   &9.85e-04&N/A \\
&  4 &4705 &2.37e-03   &2.55      &2.11e-03 &3.86  &6.20e-05&3.98\\
&  8 &18625 &1.52e-04  &3.99      &1.55e-04 &3.77  &4.22e-06&3.88\\
&  16&74113 &9.98e-06  &3.94      &1.03e-05 &3.91  &2.61e-07&4.01\\
& 32&295681 &6.53e-07 &3.94      &6.42e-07 &4.00  &1.62e-08&4.00\\
\hline
\end{tabular}}
\caption{Convergence history for $\epsilon = 10^{-2}$ on distorted grids.}
\end{center}
\end{table}

\begin{table}[H]
\begin{center}
{\footnotesize
\begin{tabular}{ccc||c c|c c|c c}
\hline
 &Mesh &D.O.F.& \multicolumn{2}{|c|}{$\|\bs{u}-\bs{u}_h\|_{0}$} & \multicolumn{2}{|c|}{$\|L-L_h\|_{0}$} & \multicolumn{2}{|c}{$\|p-p_h\|_{0}$}\\
\hline
$k$ & $h^{-1}$ &$N_{\text{global}}$ & Error & Order & Error & Order &  Error & Order \\
\hline
1& 2 &377   & 2.66e-01 &   N/A    &5.67e-02 &  N/A &1.39e-02 &   N/A \\
&  4 &1457  & 1.95e-01 &   0.46   &1.62e-02 & 1.86 &3.34e-03 & 2.11  \\
&  8 &5729  & 5.03e-02 &   1.98   &4.72e-03 & 1.80 &7.52e-04 & 2.17 \\
&  16 &22721  &1.33e-02 &  1.94    &1.52e-03 & 1.64 &1.84e-04 & 2.04 \\
&  32&90497  & 3.36e-03 &  1.99   &3.71e-04 & 2.04 &4.50e-05 & 2.03  \\
\hline
2& 2 &733   &1.32e-01 &   N/A    &6.19e-03  &N/A   &1.96e-03 &  N/A  \\
 & 4 &2857  &2.06e-02  &  2.74    &2.73e-03  &1.20  &2.45e-04 & 3.05\\
 & 8 &11281 &2.97e-03  &  2.82    &4.29e-04  &2.70  &1.85e-05 &3.76\\
 & 16 &44833 &3.41e-04  &  3.13    &5.47e-05  &2.98  &1.36e-06 &3.78\\
 & 32&178753&4.37e-05  &  2.97    &6.23e-06  &3.14  &1.34e-07 &3.35\\
\hline
3& 2 &1201 &1.15e-02   & N/A      &3.15e-03 &N/A   &1.59e-04&N/A \\
&  4 &4705 &2.89e-03   &2.42      &3.06e-04 &3.42  &1.76e-05&3.22\\
&  8 &18625 &1.45e-04  &3.95      &2.28e-05 &3.77  &6.27e-07&4.84\\
&  16 &74113 &1.02e-05  &3.83      &1.74e-06 &3.73  &2.36e-08&4.75\\
& 32&295681 &6.62e-07 &3.96      &8.98e-08 &4.28  &8.16e-010&4.86\\
\hline
\end{tabular}}
\caption{Convergence history for $\epsilon = 10^{-4}$ on distorted grids.}
\end{center}
\end{table}

\begin{table}[H]
\begin{center}
{\footnotesize
\begin{tabular}{ccc||c c|c c|c c}
\hline
 &Mesh &D.O.F.& \multicolumn{2}{|c|}{$\|\bs{u}-\bs{u}_h\|_{0}$} & \multicolumn{2}{|c|}{$\|L-L_h\|_{0}$} & \multicolumn{2}{|c}{$\|p-p_h\|_{0}$}\\
\hline
$k$ & $h^{-1}$ &$N_{\text{global}}$ & Error & Order & Error & Order &  Error & Order \\
\hline
1& 2 &377   & 2.12e-01 &   N/A    &5.76e-04 &  N/A &1.28e-02 &   N/A \\
&  4 &1457  & 1.89e-01 &   0.16   &1.51e-04 & 1.98 &3.47e-03 & 1.94  \\
&  8 &5729  & 5.03e-02 &   1.93   &5.09e-05 & 1.59 &7.66e-04 & 2.20 \\
&  16 &22721  &1.33e-02 &  1.93   &1.86e-05 & 1.46 &1.82e-04 & 2.08 \\
&  32&90497  & 3.36e-03&  1.99   &8.48e-06 & 1.14 &4.52e-05 & 2.01  \\
\hline
2& 2 &733   &1.44e-01 &   N/A    &6.85e-05  &N/A   &2.33e-03 &  N/A  \\
 & 4&2857  &1.71e-02  &  3.13    &2.03e-05  &1.78  &1.90e-04 & 3.68\\
 & 8&11281 &2.77e-03  &  2.65    &5.02e-06  &2.03  &1.77e-05 &3.46\\
 & 16&44833 &3.24e-04  &  3.11    &1.17e-06  &2.11  &1.31e-06 &3.77\\
 & 32&178753&4.18e-05  &  2.96    &2.89e-07  &2.02  &1.30e-07 &3.34\\
\hline
3& 2 &1201 &1.10e-02   & N/A      &3.34e-05 &N/A   &1.44e-04&N/A \\
&  4 &4705 &2.11e-03   &2.38      &2.88e-06 &3.58  &1.57e-05&3.24\\
&  8 &18625 &1.72e-04  &3.64      &5.40e-07 &2.43  &6.99e-07&4.52\\
&  16 &74113 &9.65e-06  &4.17      &5.04e-08 &2.43  &1.97e-08&5.16\\
& 32&295681 &6.34e-07 &3.93      &6.41e-09 &2.98  &6.91e-010&4.85\\
\hline
\end{tabular}}
\caption{Convergence history for $\epsilon = 10^{-8}$ on distorted grids.}
\label{table8}
\end{center}
\end{table}

\section{Conclusion}

In this paper, we propose a new SDG method which is robust for \draft{Brinkman} problem in Stokes and Darcy limits. In addition, the method is highly flexible by allowing fairly general meshes and hanging \draft{nodes} can be automatically incorporated in the construction of the method. Moreover, no numerical flux or stabilization term is required in our method. All these distinctive features make the proposed method a good candidate in practical applications. Several numerical experiments are reported to verify the robustness and accuracy of the proposed method.

\section*{Acknowledgements}

The research of Eric Chung is partially supported by the Hong Kong RGC General Research Fund (Project numbers 14304217 and 14302018), CUHK Faculty of Science Direct Grant 2018-19 and NSFC/RGC Joint Research Scheme (Project number HKUST620/15).


\begin{thebibliography}{1}

\bibitem{Anaya16} {\sc V. Anaya, D. Mora, R. Oyarz\'{u}a, and R. Ruiz-Baier},
{\em A priori and a posteriori error analysis of a mixed scheme for the Brinkman problem}, Numer. Math., 133 (2016), pp. 781--817.

\bibitem{Anaya15ESAIM} {\sc V. Anaya, D. Mora, C. Reales, and R. Ruiz-Baier},
{\em Stabilized mixed approximation of
axisymmetric Brinkman flow}, ESAIM Math. Model. Numer. Anal., 49 (2015), pp. 855--874.

\bibitem{Araya17} {\sc R. Araya, C. Harder, A. H. Poza, and F. Valentin},
{\em Multiscale hybrid-mixed method for the Stokes and Brinkman equations-The method}, Comput. Methods Appl. Mech. Engrg., 324 (2017), pp. 29--53.

\bibitem{Badia09} {\sc S. Badia and R. Codina},
{\em Unified stabilized finite element formulations for the Stokes and the Darcy problems}, SIAM J. Numer. Anal., 47 (2009), pp. 1971--2000.


\bibitem{Beir13} {\sc L. Beir\~{a}o da Veiga, F. Brezzi, A. Cangiani, G. Manzini, L. D. Marini, and A. Russo},
{\em Basic principles of virtual element method}, Math. Models Methods Appl. Sci., 23 (2013), pp. 199--214.

\bibitem{Benner16} {\sc P. Benner, S. Dolgov, A. Onwunta, and M. Stoll},
{\em Low-rank solvers for unsteady Stokes–Brinkman optimal control problem with random data}, Comput. Methods Appl. Mech. Engrg., 304 (2016), pp. 26--54.

\bibitem{Botti18} {\sc L. Botti, D. A. Di Pietro, and J. Droniou},
{\em A hybrid high-order discretization of the Brinkman problem robust in theDarcy and Stokes limits}, Comput. Methods Appl. Mech. Engrg., 341 (2018), pp. 278--310.

\bibitem{Braack11} {\sc M. Braack and F. Schieweck},
{\em Equal-order finite elements with local projection stabilization for the Darcy–Brinkman equations}, Comput. Methods Appl. Mech. Engrg., 200 (2011), pp. 1126--1136.

\bibitem{Brenner03} {\sc S. C. Brenner},
{\em Poincar\'{e}-Friedrichs inequalities for piecewise $H^1$ functions}, SIAM J. Numer. Anal., 41 (2003), pp. 306--324.

\bibitem{BurmanHansbo05} {\sc E. Burman and P.Hansbo},
{\em Stabilized Crouzeix-Raviart element for the Darcy-Stokes problem}, Numer. Methods Partial Differential Equations, 21 (2005), pp. 986--997.

\bibitem{BurmanHansbo07} {\sc E. Burman and P.Hansbo},
{\em A unified stabilized method for Stokes’ and Darcy’s equations}, J. Comput. Appl. Math., 198 (2007), pp. 35--51.

\bibitem{Caceres17} {\sc E. C\'{a}ceres and G. N. Gatica},
{\em A mixed virtual element method for the Brinkman problem}, Math. Models Meth. Appl. Sci., 27 (2017), pp. 707--743.


\bibitem{Cangiani16} {\sc A. Cangiani, E. H. Georgoulis, T. Pryer, and O. J. Sutton},
{\em A posteriori error estimates for the virtual element method}, Numer. Math., 137 (2017), pp. 857--893.

\bibitem{CheungChungKim18} {\sc S. Cheung, E. Chung and H. Kim},
{\em A mass conservative scheme for fluid-structure interaction problems by the staggered discontinuous Galerkin method}, J. Sci. Comput., 74 (2018), pp. 1423--1456.


\bibitem{ChungCiarletYu13} {\sc E. Chung, P. Ciarlet, Jr., and T. Yu},
{\em Convergence and superconvergence of staggered discontinuous Galerkin methods for the three-dimensional Maxwell's equations on Cartesian grids}, J. Comput. Phys., 235 (2013), pp. 14--31.

\bibitem{EricEngquistwave} {\sc E. Chung and B. Engquist},
{\em Optimal discontinuous Galerkin methods for wave propagation}, SIAM J. Numer. Anal., 44 (2006), pp. 2131--2158.

\bibitem{ChungWave2} {\sc E. Chung and B. Engquist},
{\em Optimal discontinuous Galerkin methods for the acoustic wave equation in higher dimensions}, SIAM
J. Numer. Anal., 47 (2009), pp. 3820--3848.

\bibitem{ChungParkZhao18} {\sc E. Chung, E.-J. park, and L. Zhao},
{\em Guaranteed a posteriori error estimates for a staggered discontinuous Galerkin method}, J. Sci. Comput., 75 (2018), pp. 1079--1101.

\bibitem{ChungQiu17} {\sc E. Chung and W. Qiu},
{\em Analysis of an SDG method for the incompressible Navier-Stokes equations}, SIAM J. Numer. Anal., 55 (2017), pp. 543--569.

\bibitem{Ciarlet78} {\sc P. G. Ciarlet},
{\em The Finite Element Method for Elliptic Problems}, North-Holland Publishing, Amsterdam, 1978.

\bibitem{DuChung18} {\sc J. Du and E. chung}, {\em An adaptive staggered discontinuous Galerkin method for the steady state convection-diffusion equation}, J. Sci. Comput., 77 (2018), pp. 1490--1518.

\bibitem{EvansHughes13} {\sc J. A. Evans and T. J. R. Hughes}, {\em Isogeometric divergence-conforming B-splines for the Darcy-Stokes-Brinkman
equations}, Math. Models Methods Appl. Sci., 23 (2013), pp. 671--741.


\bibitem{FuQiu19} {\sc G. Fu, Y. Jin, and W. Qiu},
{\em Parameter-free superconvergent H(div)-conforming HDG methods for the Brinkman equations}, IMA J. Numer. Anal., 39 (2019), pp. 957--982.

\bibitem{Gatica14} {\sc G. N. Gatica, L. F. Gatica, and A. M\'{a}rquez},
{\em Analysis of a pseudostress-based mixed finite element method for the Brinkman model of porous media flow}, Numer. Math., 126 (2014), pp. 635--677.

\bibitem{Girault86} {\sc V. Girault and P.-A. Raviart},
{\em Finite element methods for Navier-Stokes equations}, Berlin: Springer, 1986.

\bibitem{Guzman12} {\sc J. Guzm\'{a}n and M. Neilan},
{\em A family of nonconforming elements for the Brinkman problem}, IMA J. Numer. Anal., 32 (2012), pp. 1484--1508.

\bibitem{Hannukainen11} {\sc A. Hannukainen, M. Juntunen and R. Stenberg},
{\em Computations with finite element methods
for the Brinkman problem}, Comput. Geosci., 15 (2011), pp. 155--166.

\bibitem{Hong16} {\sc Q. Hong and J. Kraus},
{\em Uniformly stable discontinuous Galerkin discretization and robust iterative solution methods for the Brinkman problem}, SIAM J. Numer. Anal., 54 (2016), pp. 2750--2774.


\bibitem{KimChungLam16} {\sc H. Kim, E. Chung, and C. Y. Lam},
{\em Mortar formulation for a class of staggered discontinuous Galerkin methods}, Comput. Math. Appl., 71 (2016), pp. 1568--1585.

\bibitem{KimChung13} {\sc H. Kim, E. Chung, and C. S. Lee},
{\em A staggered discontinuous Galerkin method for the Stokes system}, SIAM J. Numer. Anal., 51 (2013), pp. 3327--3350.

\bibitem{Konno11} {\sc J. K\"{o}nn\"{o} and R. Stenberg},
{\em $H(\text{div})$-conforming finite elements for the Brinkman problem}, Math. Models Meth. Appl. Sci., 21 (2011), pp. 2227--2248.

\bibitem{Konno12} {\sc J. K\"{o}nn\"{o} and R. Stenberg},
{\em Numerical computations with H(div)-finite elements for the Brinkman problem}, Comput. Geosci., 16 (2012), pp. 139--158.

\bibitem{LeeKim16} {\sc J. J. Lee and H. Kim},
{\em Analysis of a staggered discontinuous Galerkin method for linear elasticity}, J. Sci. Comput., 66 (2016), pp. 625--649.

\bibitem{MardalTaiWinther02} {\sc K. A. Mardal, X.-C. Tai, and R. Winther},
{\em A robust finite element method for Darcy-Stokes flow}, SIAM J. Numer. Anal., 40 (2002), pp. 1605--1631.

\bibitem{MuWangYe14} {\sc L. Mu, J. Wang, and X. Ye},
{\em A stable numerical algorithm for the Brinkman equations by weak Galerkin finite element methods}, J. Comput. Phys., 273 (2014), pp. 327--342.

\bibitem{Pietro15} {\sc D. A. Di Pietro and A. Ern},
{\em A hybrid high-order locking-free method for linear elasticity on general meshes}, Comput. Methods Appl. Mech. Engrg., 283 (2015), pp. 1--21.

\bibitem{Vacca18} {\sc G. Vacca},
{\em An $H^1$-conforming virtual element for Darcy and Brinkman equations}, Math. Models Meth. Appl. Sci., 28 (2018), pp. 159--194.

\bibitem{Vassilevski14} {\sc P. Vassilevski and U. Villa},
{\em A mixed formulation for the Brinkman problem}, SIAM J. Numer. Anal., 52 (2014), pp. 258--281.


\bibitem{XieXu08} {\sc X. Xie, J. Xu, and G. Xue},
{\em Uniformly-stable finite element methods for Darcy-Stokes-Brinkman models}, J. Comput. Math., 26 (2008), pp. 437--455.


\bibitem{LinaPark} {\sc L. Zhao and E.-J. Park}, {\em A staggered discontinuous Galerkin method of minimal dimension on quadrilateral and polygonal meshes}, SIAM J. Sci. Comput., 40 (2018), pp. 2543--2567.

\bibitem{LinaParkShin} {\sc L. Zhao, E.-J. Park, and D.-W. Shin}, {\em A staggered DG method of minimal dimension for the Stokes equations on general meshes}, Comput. Methods Appl. Mech. Engrg., 345 (2019), pp. 854--875.

\bibitem{LinaParkSD} {\sc L. Zhao and E.-J. Park}, {\em A lowest order staggered DG method for the coupled Stokes-Darcy problem}, IMA J. Numer. Anal., to appear.

\end{thebibliography}
\end{document}